      \theoremstyle{plain}
      \newtheorem{theorem}{Theorem}[section]
      \newtheorem{lemma}[theorem]{Lemma}
      \newtheorem{corollary}[theorem]{Corollary}
      \newtheorem{proposition}[theorem]{Proposition}
      \newtheorem{remark}[theorem]{Remark}
      \newtheorem{definition}[theorem]{Definition}        
\numberwithin{equation}{section}
      \def\@setcopyright{}
      \def\serieslogo@{}
\def\A{\EuScript{A}} 
\def\B{\EuScript{B}} 
\def\E{\mathcal{V}}
\def\H{\mathcal{H}}
\def\M{\mathcal{M}}
\def\m{m}
\def\R{\mathbb R}
\def\Z{\mathbb Z}
\def\N{\mathbb N}
\def\T{\mathbb T}
\def\dist{\text{dist}}
\def\Id{\text{Id}}
\def\e{\epsilon}
\def\a{\alpha}
\def\b{\beta}
\def\mh{\hat \mu}
\def\nh{\hat \nu}
\def\g{\gamma}
\def\bv{\mathbf v}
\begin{document}

%\date{\today}
\author{Boris Kalinin$^{\ast}$ and Victoria Sadovskaya$^{\ast \ast}$}

\address{Department of Mathematics, The Pennsylvania State University, University Park, PA 16802, USA.}
\email{kalinin@psu.edu, sadovskaya@psu.edu}

\title [Holonomies and cohomology for cocycles $\;\;$]
{Holonomies and cohomology for cocycles over partially hyperbolic diffeomorphisms} 

\thanks{$^{\ast}$ Supported in part by NSF grant DMS-1101150}
\thanks{$^{\ast \ast}$ Supported in part by NSF grant DMS-1301693}

%%%%%%%%%%%%%%%%%%%%%%%%%%%%%%%

\begin{abstract} 
We consider  group-valued cocycles over a partially hyperbolic diffeomorphism 
which is accessible volume-preserving and center bunched. We study  cocycles with values in the group of invertible continuous linear operators on 
a Banach space. We describe properties of holonomies for fiber bunched  
cocycles and establish their H\"older regularity.
We also study cohomology of cocycles and its connection with
 holonomies. We obtain a result on regularity of a measurable 
 conjugacy, as well as  a necessary and sufficient condition for 
 existence of a continuous conjugacy  between two cocycles.
\end{abstract}

\maketitle 

\section{Introduction}
 
Cocycles and their cohomology  play an important role in dynamics.
For example, they appear in the study of time changes  
for flows and group actions, existence and smoothness of absolutely 
continuous invariant measures, existence and smoothness of 
conjugacies between dynamical systems, rigidity in dynamical systems
and  group actions.
In this paper we consider cohomology of group-valued cocycles over 
partially hyperbolic diffeomorphisms. 
 
\begin{definition} Let $f$ be a diffeomorphism of a compact manifold $\M$, 
let $G$ be a topological  group 
equipped with a  complete metric, and let 
$A:\M\to G$  be a continuous function. 
The $G$-valued cocycle 
over $f$ generated by $A$ is the map 
 $$\A:\,\M \times \Z \,\to G   \qquad \text{defined  by }$$
 $$
\begin{aligned}
&\A(x,0)=\A_x^0=e_G,  \;\;\;\; \A(x,n)=\A_x^n = 
A(f^{n-1} x)\circ \cdots \circ A(x) \quad \text{ and }\;\; \\
&\A(x,-n)=\A_x^{-n}= (\A_{f^{-n} x}^n)^{-1} = 
 (A(f^{-n} x))^{-1}\circ \cdots \circ (A(f^{-1}x))^{-1}, \quad n\in \N.
\end{aligned}
$$
\end{definition}

 If the tangent bundle of $\M$ is trivial, $T\M= \M\times \R^d$, 
then the differential $Df$ can be viewed as a $GL(d,\R)$-valued cocycle:
$  A(x)=Df_x$ and  $\A_x^n=Df^n_x.$ More generally, one can consider 
the restriction of $Df$ to a continuous invariant sub-bundle of $\, T\M$, 
for example stable, unstable, or center.  In this paper we consider a more general setting of cocycles with values 
in the group of invertible operators on a Banach space.

A natural  equivalence relation for cocycles is defined as follows.  

\begin{definition} \label{cohomology def}
Cocycles $\A$ and $\B$ are 
(measurably, continuously)  {\em cohomologous} 
if there exists a (measurable, continuous) 
function  $C:\M\to G$ such that
$$ \,\A_x^n=C(f^n x) \circ \B_x^n \circ  C(x)^{-1} 
  \;\text{ for all }n\in \Z \text{ and }x\in \M,
  $$ 
equivalently, for the generators  $A(x)=C(fx) \circ B(x) \circ C(x)^{-1}\;\text{ for all }x\in \M.$
\end{definition}

\noindent We refer to $C$ as a {\em conjugacy} between $\A$  and $\B$.
It is also called a {\em transfer map}. For the differential example above, 
$C(x)$ can be viewed as a coordinate change on $T_x\M$.

In the context of cocycles over partially hyperbolic systems, two main
cohomology problems have been considered so far. One is finding 
sufficient conditions for existence of a continuous conjugacy.
The other is determining whether a measurable conjugacy 
between two cocycles is necessarily continuous or more regular. 

For H\"older continuous real-valued cocycles over systems with local accessibility,  the first problem was resolved in  \cite{KK}, where conditions 
for existence of a conjugacy were established in terms of $su$-cycle 
functionals. Recently, the study of real-valued cocycles  was advanced by
A. Wilkinson  in \cite{W}, where she weakened the assumption from  local accessibility to accessibility  and obtained a positive solution for the  second problem. Previous results in this direction were established in \cite{D} 
for smooth real-valued cocycles over systems with rapid mixing. 

For cocycles with values in non-commutative groups, studying cohomology
is more difficult. In all results so far, the cocycles satisfied additional assumptions related to their growth, for example fiber bunching for linear cocycles. 
This property means that noncoformality of the cocycle is dominated
by the contraction/expansion of $f$ in the stable/unstable directions.
Also, some conclusions in the non-commutative case are different from those 
in the commutative case. For example, a measurable conjugacy  between 
two cocycles is not necessarily continuous, even when both cocycles are fiber 
bunched \cite{PW}. Theorem \ref{measurable cohomology} gives the first 
result on continuity of a measurable conjugacy for non-commutative 
cocycles over partially hyperbolic systems. We make an additional
assumption that one of the cocycles  is uniformly quasiconformal.
The assumption is close to optimal and the theorem extends all 
similar results for cocycles over hyperbolic diffeomorphisms 
\cite{Sch,NP,PW,S14}.

We also obtain a necessary and sufficient condition for existence of a 
continuous conjugacy between two cocycles in terms of their $su$-cycle weights. 
Previously, a sufficient condition was obtained in \cite{KN} for conjugacy to a constant cocycle over a system with local accessibility. However, for non-commutative cocycles the general problem cannot be reduced to the case 
when one cocycle is constant. We note that in all our results partial 
hyperbolicity of the base system is  pointwise and accessibility is not 
assumed to be local. The fiber bunching for cocycles is assumed
in pointwise sense so, in particular, the results apply to the derivative 
cocycle along the center direction of a strongly center bunched partially hyperbolic diffeomorphism.

Fiber bunching of a cocycle implies existence of so called stable and 
unstable holonomies. Some of our results make a weaker assumption of
existence of holonomies in place of fiber bunching. Holonomies are an important and convenient tool in the study of cocycles. 
In Theorem \ref{alpha Holder} we 
establish H\"older continuity of holonomies, which is a result 
of independent interest. We also obtain results on  the relationship between 
conjugacy and  holonomies of  cocycles, which turns out
to be more complicated then in the commutative case.  For example, 
$su$-cycle weights may be non-trivial for a cocycle continuously 
cohomologous to a constant one.

In Section 2 we give definitions of  partially hyperbolic diffeomorphisms and Banach cocycles. In Section 3 we discuss holonomies and state 
our result on their regularity. In Section 4 we formulate our results on cohomology of cocycles, and in the last section we give proofs of
all the results.

\section{Preliminaries}

 \subsection{Partially hyperbolic diffeomorphisms.} \label{partial}
 (See  \cite{BW} for more details.)
 
Let $\M$ be a compact connected smooth manifold.
 A diffeomorphism $f$ of $ \M$ 
is said to be {\em partially hyperbolic} if
there exist a nontrivial $Df$-invariant splitting of the tangent bundle 
$T\M =E^s\oplus E^c \oplus E^u,$ and  a Riemannian 
metric on $\M$ for which one can choose continuous positive 
functions $\nu<1,\,$ $\hat\nu<1,\,$ $\gamma,$ $\hat\gamma\,$ such that 
for any $x \in \M$ and unit vectors  
$\,\bv^s\in E^s(x)$, $\,\bv^c\in E^c(x)$, and $\,\bv^u\in E^u(x)$
\begin{equation}\label{partial def}
\|Df_x(\bv^s)\| < \nu(x) <\gamma(x) <\|Df_x(\bv^c)\| < \hat\gamma(x)^{-1} <
\hat\nu(x)^{-1} <\|Df_x(\bv^u)\|.
\end{equation}
We also choose continuous  functions $\mu$ and $\hat \mu$ 
such that for all $x$ in $\M$
\begin{equation}\label{kappa}
\mu(x)<\|Df_x(\bv^s)\|  \,\text{ if }\, \bv^x \in E^s(x)
\quad\text{and}\quad
\|Df_x(\bv^u)\| < \hat\mu (x)^{-1}\text{ if }\, \bv^u \in E^u(x).
\end{equation}
 The sub-bundles $E^s$, $E^u$, and $E^c$ are called, respectively,
 stable, unstable, and center.
$E^s$ and $E^u$  are 
tangent to the stable and unstable foliations $W^s$ and $W^u$
 respectively.  
 
  An {\it $su$-path} in $\M$ is a concatenation 
 of finitely many subpaths which lie entirely in a single 
 leaf of $W^s$ or  $W^u$. A partially hyperbolic  
 diffeomorphism $f$  is called {\em accessible}  if any two points 
 in $\M$ can be connected by an $su$-path.

 We say that $f$ is {\em volume-preserving} if it has an invariant probability 
measure  $\m$ in the measure class of a volume induced by a 
Riemannian metric. It is conjectured that any essentially accessible 
$f$ is ergodic with respect to such  $\m$. The conjecture was proved
in cite \cite{BW} under the assumption that $f$ is $C^2$ and center bunched,
or that $f$ is $C^{1+\epsilon}$, $0<\epsilon<1$, and strongly center bunched.
The diffeomorphism $f$ is called {\em center bunched}\,
if the functions  $\nu, \hat\nu, \gamma, \hat\gamma$  can be 
chosen to satisfy
\begin{equation}\label{center bunching}
\nu<\gamma \hat \gamma \quad \text{and}
\quad \hat\nu<\gamma \hat \gamma .
\end{equation}
A $C^{1+\epsilon}$ diffeomorphism $f$ is called
{\em strongly center bunched}\, if
 \begin{equation}\label{strong center bunching}
\nu^\theta<\gamma \hat \gamma \quad \text{and}
\quad \hat\nu^\theta<\gamma \hat \gamma
\end{equation}
 for some $\theta \in (0, \epsilon)$ satisfying the inequalities
$\,\nu\gamma^{-1}< \mu^\theta$ and
$\,\hat\nu \hat\gamma^{-1}< \hat \mu^\theta$.
These inequalities imply that $E^c$ is $\theta$-H\"older.
Note that $(\gamma \hat \gamma)^{-1}$ is an estimate of 
non-conformality of $Df|_{E^c}$.

 \subsection{Banach  cocycles.} \label{Banach}
Let $V$ be a Banach space, i.e. a vector space equipped with a norm $\|.\|$
such that $V$ is complete with respect to the induced metric. We denote by
$L(V)$ the space of continuous linear operators from $V$ to itself. Then
$L(V)$ becomes a Banach space when equipped with the operator norm 
$$
\|A\|=\sup \,\{ \|Av\| : \,v\in V , \;\|v\| \le 1\}, \quad A \in L(V).
$$ 
We denote by $GL(V)$ the set  of invertible elements in  $L(V)$.
%i.e. $GL(V)$ consists of linear operators
% $A\in L(V)$ such that $A^{-1}$ exists. Recall that $A^{-1}$ is a linear
% operator and is automatically continuous, so that $A^{-1} \in GL(V)$.
The set $GL(V)$ is an open subset of $L(V)$ and a group with respect
to composition. We use the following metric on $GL(V)$, with respect to which
it is complete,
\begin{equation}  \label{distGLV} 
d (A, B)=\dist_{GL(V)} (A, B) = \| A  - B \|  + \| A^{-1}  - B^{-1} \|.
\end{equation}
We call a $GL(V)$-valued cocycle $\A$ a {\em Banach cocycle}. 
It is called {\em $\beta$-H\"older} if its generator $A : \M \to GL(V)$ is H\"older continuous
with exponent $\beta$ with respect to the metric $d$.
We note that on any compact set $S\subset GL(V)$  the distance $d(A,B)$
is Lipschitz equivalent to $\| A  - B \|$ by Lemma \ref{eqdist} below. 
Therefore, 
since $\M$ is compact, a cocycle $\A$ is $\beta$-H\"older if and only if 
$$
\| A (x)  - A(y) \| \le c \, \dist (x,y)^\beta \quad\text{for all }x,y \in \M. 
$$

\begin{lemma}  \label{eqdist}
Suppose that for a subset $S \in GL(V)$ there exists $M$ such that $\|A\| \le M$
and $\| A^{-1}\| \le M$ for all $A \in S$. Then for all $A,B \in S$ we have
$$
\begin{aligned}
M^{-1}\, \| A^{-1} B-\Id \, \| \,&\le \,\| A  - B \| \,\le \, 
d (A, B) \,=\, d(A^{-1} , B^{-1}) \,\le \\
&\le (M^2+1) \, \| A  - B \| \,\le\, M(M^2+1)\, \| A^{-1} B-\Id \, \|.
\end{aligned}
$$
\end{lemma}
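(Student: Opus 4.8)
The plan is to establish the displayed chain as a succession of five elementary comparisons, each a consequence of submultiplicativity of the operator norm together with the two algebraic identities
\begin{equation*}
A^{-1}B-\Id = A^{-1}(B-A) \qquad\text{and}\qquad A^{-1}-B^{-1}=A^{-1}(B-A)B^{-1},
\end{equation*}
valid for all $A,B\in GL(V)$. Throughout I would use only the hypothesis in the form $\|A\|\le M$, $\|A^{-1}\|\le M$, $\|B\|\le M$, $\|B^{-1}\|\le M$.

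For the first inequality, the identity $A^{-1}B-\Id=A^{-1}(B-A)$ gives $\|A^{-1}B-\Id\|\le \|A^{-1}\|\,\|B-A\|\le M\,\|A-B\|$, i.e. $M^{-1}\|A^{-1}B-\Id\|\le\|A-B\|$. The second inequality $\|A-B\|\le d(A,B)$ is immediate from the definition \eqref{distGLV}, since $d(A,B)=\|A-B\|+\|A^{-1}-B^{-1}\|$ and the added term is nonnegative. The equality $d(A,B)=d(A^{-1},B^{-1})$ follows by unwinding \eqref{distGLV}: $d(A^{-1},B^{-1})=\|A^{-1}-B^{-1}\|+\|(A^{-1})^{-1}-(B^{-1})^{-1}\|=\|A^{-1}-B^{-1}\|+\|A-B\|=d(A,B)$.

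For the fourth inequality, the identity $A^{-1}-B^{-1}=A^{-1}(B-A)B^{-1}$ gives $\|A^{-1}-B^{-1}\|\le\|A^{-1}\|\,\|B-A\|\,\|B^{-1}\|\le M^2\|A-B\|$, hence $d(A,B)=\|A-B\|+\|A^{-1}-B^{-1}\|\le(M^2+1)\|A-B\|$. Finally, rewriting the first identity as $B-A=A(A^{-1}B-\Id)$ yields $\|A-B\|\le\|A\|\,\|A^{-1}B-\Id\|\le M\|A^{-1}B-\Id\|$, so $(M^2+1)\|A-B\|\le M(M^2+1)\|A^{-1}B-\Id\|$, which is the last inequality in the chain. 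There is no genuine obstacle here; the only point requiring a trivial amount of care is bookkeeping of which of the two bounds $\|A\|\le M$ and $\|A^{-1}\|\le M$ is invoked at each step, and the hypothesis supplies exactly both (and likewise for $B$).
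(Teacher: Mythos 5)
Your proof is correct and follows essentially the same route as the paper: the paper records the key estimate $\|A^{-1}-B^{-1}\|\le\|A^{-1}\|\,\|B-A\|\,\|B^{-1}\|\le M^2\|A-B\|$ for the middle comparison and states that the remaining inequalities are obtained similarly, which is exactly the pattern of submultiplicativity plus the identities $A^{-1}B-\Id=A^{-1}(B-A)$ and $B-A=A(A^{-1}B-\Id)$ that you spell out. No gaps; you have simply written out the details the paper leaves implicit.
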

\begin{proof} The equality is clear from the definition of $d$,
and the next inequality follows from the  estimate
$$
\| A^{-1}  - B^{-1} \| \le \|A^{-1} \| \cdot \| B-A \| \cdot \| B^{-1}\| \le M^2 \| A  - B \|.
$$
The other inequalities are obtained similarly.
\end{proof}

\begin{definition} \label{fiber bunched def}
 A cocycle $\A$ over  $f$ is called\, {\em $\beta$ fiber bunched} if it is $\beta$-H\"older and
\begin{equation}\label{fiber bunched}
\| \A(x)\|\cdot \| \A(x)^{-1}\| \cdot  \nu(x)^\beta <1 \quad\text{and}\quad
\| \A(x)\|\cdot \| \A(x)^{-1}\| \cdot  \hat\nu(x)^\beta <1,
\end{equation}
for all $x$ in $\M$, where $\nu$ and $\hat\nu$ are as in \eqref{partial def}.
\end{definition}

This means that nonconformality of $\A$ is dominated by the 
expansion/contraction along unstable/stable foliations in the base.
Note that the cocycle $Df|_{E^c}$ for a strongly center bunched
 \eqref{strong center bunching} partially hyperbolic diffeomorphism
  is $\theta$ fiber bunched.

We can view the generator $A$ as the automorphism of the trivial vector 
bundle $\E=\M \times V$ given by $A(x, v)= (fx, A(x) v)$, and $\A _x ^n$ as a linear
map between the fibers $\E_x$ and $\E_{f^nx}$.  We deal with the case of a trivial 
bundle for convenience. Our results extend directly to linear cocycles defined
more generally as bundle automorphisms, see \cite{KS13} for a description
of this setting.

\subsection{Standing assumptions} 
{\em In  this paper,\\
$\bullet$ $\M$ is a compact connected smooth manifold; 
\vskip.05cm
\noindent $\bullet$  $f$ is an accessible partially hyperbolic diffeomorphism of $\M$
that preserves a volume 

$\m$ and is  
either $C^2$ and center bunched, or $C^{1+\epsilon}$ and strongly center bunched;
\vskip.05cm
\noindent $\bullet$  $\A$ and $ \B$ are $GL(V)$-valued continuous cocycles over $f$, 
where $V$ is a Banach space.}

%%%%%%%%%%%%%%%%%%%%%%%%%%%
%%% Cohomology  %%%%%%%%
%%%%%%%%%%%%%%%%%%%%%%%%%%%

\section{Holonomies and their regularity}

An important role in the study of cocycles is played by holonomies. They were
introduced by M. Viana in \cite{V} for linear cocycles and further developed 
and used in \cite{ASV,KS13}. For a fiber bunched linear cocycle $\A$, a
holonomy can be obtained as a limit of the products 
$(\A^n_y)^{-1} \circ \A^n_x$.
Convergence and limits of such products have been studied for various types of group-valued cocycles whose growth is slower than the contraction/expansion
 in the base (see e.g. \cite{NT,PW, LW}). It is related to existence of strong stable/unstable manifolds for the extended system on the bundle.  We use the
axiomatic definition of holonomies given in \cite{V,ASV}. We note, however, 
that the resulting object is non-unique in general, see discussion after
Corollary  \ref{cohomologous to constant}.

\begin{definition} \label{holonomies}
A {\em stable holonomy} for a  cocycle $\A$ is 
a continuous map \\ $H^{\A,s}:(x,y)\mapsto H^{\A,s}_{x,y}$, 
where $x\in \M$ and $y\in W^s(x)$, such that  
\begin{itemize}
\item[(H1)] $H^{\A,s}_{x,y}$ is an element of $GL(V)$, viewed as a  map from $\E_x$ to $\E_y$;
\item[(H2)]  $H^{\A,s}_{x,x}=\Id\,$ for every $x\in \M$ and 
$\,H^{\A,s}_{y,z} \circ H^{\A,s}_{x,y}= H^{\A,s}_{x,z}$; 
\item[(H3)]  $H^{\A,s}_{x,y}= (\A^n_y)^{-1}\circ H^{\A,s}_{f^nx ,f^ny} \circ \A^n_x\;$ 
for all $n\in \N$.
\end{itemize}
\end{definition}
\noindent We say that a stable holonomy is {\em $\beta$-H\"older} 
(along the leaves of $W^s$) if it satisfies the following additional property: for any $R>0$ there exists $K$ such that 
\begin{itemize}
 \item [(H4)] $\;\| H^{\A,s}_{x,y} - \Id \,\| \leq K\,\dist_{W^s}  (x,y)^{\beta} \;$
 for any $x \in \M$ and $y \in W^s_{R}(x)$.
 \end{itemize}
Here $\dist_{W^s}$ denotes the distance along a leaf of the stable foliation $W^s$, and
$W^s_{R}(x)$ denotes the ball in $W^s(x)$  centered at $x$ of radius $R$ in this distance.
By Lemma \ref{eqdist}, the left hand side of (H4) is equivalent 
to the $GL(V)$ distance $d\,(H^{\A,s}_{x,y},\Id)$ on the compact set 
$\{ H^{\A,s}_{x,y} :\; x\in \M, \;y \in \overline{W^s_{R}(x)} \,\}.$
\vskip.1cm

Fiber bunched cocycles have a canonical holonomy. The following result was proved for finite dimensional Banach spaces $V$, but the arguments work 
for the general case without any modifications.

\begin{proposition} [Proposition 4.2 \cite{KS13}, cf. Proposition 3.4 \cite{ASV}]\label{holonomies exist} $\;$

\noindent Suppose that a cocycle $\A$ is $\beta$ fiber bunched.
Then for any $x\in \M$ and $y\in W^s(x)$,
\begin{equation}\label{as limits}
  H^{\A,s}_{x,y} \overset{def}{=}\underset{n\to\infty}{\lim} \,(\A^n_y)^{-1} \circ \A^n_x\,
\end{equation} 
 exists and satisfies (H1,2,3,4).
The stable holonomy for $\A$ satisfying (H4) is unique.
 \end{proposition}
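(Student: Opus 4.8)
The plan is to show that the products $\A^n_{x,y} := (\A^n_y)^{-1}\circ \A^n_x$ form a Cauchy sequence in $GL(V)$, with geometric-type estimates coming from fiber bunching, and then to verify that the limit satisfies (H1)--(H4). First I would set up notation along a stable leaf: for $x\in\M$ and $y\in W^s(x)$, write $x_n = f^n x$, $y_n = f^n y$, and record the standard contraction estimate $\dist_{W^s}(x_n,y_n)\le C\,\nu(x)\cdots\nu(x_{n-1})\,\dist_{W^s}(x,y)$ (up to the usual adjustments with $\hat\nu$; on a fixed ball $W^s_R(x)$ all constants are uniform). The key algebraic identity is the telescoping
\[
\A^{n+1}_{x,y} - \A^n_{x,y} = (\A^n_y)^{-1}\circ\big(A(y_n)^{-1}\circ\A(x_n)\circ\ - \Id\big)\circ (A(x_n))^{-1}\circ\ \cdots
\]
— more precisely, $\A^{n+1}_{x,y} = (\A^n_y)^{-1}\circ\big(A(y_n)^{-1}A(x_n)\big)\circ\A^n_x$ viewed after inserting $(\A^n_y)(\A^n_y)^{-1}$, so that
\[
\A^{n+1}_{x,y}-\A^n_{x,y}=(\A^n_y)^{-1}\circ\big(A(y_n)^{-1}A(x_n)-\Id\big)\circ\A^n_x .
\]
Using $\beta$-H\"older continuity of $A$ (via Lemma~\ref{eqdist}) one gets $\|A(y_n)^{-1}A(x_n)-\Id\|\le C\,\dist(x_n,y_n)^\beta$, which by the contraction estimate is bounded by $C\,(\nu(x)\cdots\nu(x_{n-1}))^\beta\,\dist_{W^s}(x,y)^\beta$.

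The core estimate is then to bound $\|(\A^n_y)^{-1}\|\cdot\|\A^n_x\|$. Here I would multiply and divide telescopically by $\|\A^k_{x}\|$-type quantities and invoke the fiber bunching inequality \eqref{fiber bunched} in its pointwise, submultiplicative form: iterating $\|\A(x_k)\|\cdot\|\A(x_k)^{-1}\|\cdot\nu(x_k)^\beta<1$ along the orbit yields $\|\A^n_x\|\cdot\|(\A^n_x)^{-1}\|\cdot(\nu(x)\cdots\nu(x_{n-1}))^\beta \le \theta^n$ for a uniform $\theta<1$ on $\M$ by compactness and continuity. Since $y_n$ is exponentially close to $x_n$, the same bound holds with $(\A^n_y)$ in place of $(\A^n_x)$ up to a bounded factor; combining, $\|(\A^n_y)^{-1}\|\cdot\|\A^n_x\|\cdot(\nu(x)\cdots\nu(x_{n-1}))^\beta$ stays uniformly bounded (a short comparison argument using that $\A^n_y = H\circ\A^n_x\circ H'$ for near-identity $H,H'$, or a direct induction). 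Feeding this back, $\|\A^{n+1}_{x,y}-\A^n_{x,y}\|\le C\,\theta^n\,\dist_{W^s}(x,y)^\beta$, so the sequence converges uniformly and geometrically; the same argument applied to the inverses $(\A^n_{x,y})^{-1}$ gives convergence in the metric $d$ of \eqref{distGLV}, hence the limit lies in $GL(V)$, giving (H1). Summing the geometric series from $n=0$ gives (H4) directly, with $K$ depending on $R$.

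Properties (H2) and (H3) are then formal. For the cocycle equation (H3), note $(\A^m_y)^{-1}\circ H^{\A,s}_{f^m x, f^m y}\circ\A^m_x = \lim_n (\A^m_y)^{-1}\circ(\A^n_{y_m})^{-1}\circ\A^n_{x_m}\circ\A^m_x = \lim_n (\A^{n+m}_y)^{-1}\circ\A^{n+m}_x = H^{\A,s}_{x,y}$, using the cocycle identity $\A^n_{x_m}\circ\A^m_x=\A^{n+m}_x$. For (H2): $H^{\A,s}_{x,x}=\Id$ is immediate since each term is $\Id$; the chain rule $H^{\A,s}_{y,z}\circ H^{\A,s}_{x,y}=H^{\A,s}_{x,z}$ follows by writing all three as limits of the corresponding products and cancelling. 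Continuity of $(x,y)\mapsto H^{\A,s}_{x,y}$ follows because the convergence in \eqref{as limits} is uniform on compact families of pairs and each finite product is continuous. For uniqueness: if $H$ and $\tilde H$ both satisfy (H1)--(H4), then $\Delta_{x,y}:= H^{\tilde{}}_{x,y}\circ (H_{x,y})^{-1}$, as a map from $\E_y$ to itself, satisfies by (H3) the conjugacy relation $\Delta_{x,y}=\A^n_y\circ\Delta_{f^nx,f^ny}\circ(\A^n_y)^{-1}$; applying (H4) to both holonomies bounds $\|\Delta_{f^nx,f^ny}-\Id\|\le C\dist_{W^s}(x_n,y_n)^\beta\to 0$ while the conjugating norm product $\|\A^n_y\|\cdot\|(\A^n_y)^{-1}\|$ grows at most like $(\nu\cdots)^{-\beta}$ times $\theta^n$, so the right side tends to $0$ and $\Delta_{x,y}=\Id$.

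The main obstacle is the uniform control of $\|(\A^n_y)^{-1}\|\cdot\|\A^n_x\|$ against the contraction rate: one must transfer the fiber-bunching bound, which is naturally an estimate about $\|\A^n_x\|\cdot\|(\A^n_x)^{-1}\|$ along a single orbit, to the two-orbit quantity $\|(\A^n_y)^{-1}\|\cdot\|\A^n_x\|$ where $x$ and $y$ lie on the same stable leaf. This requires comparing $\A^n_x$ and $\A^n_y$ for $y\in W^s(x)$ — precisely the content of the H\"older estimate one is trying to prove — so the argument is genuinely an induction in which the bound on $\|\A^n_{x,y}-\Id\|$ and the bound on $\|(\A^n_y)^{-1}\|\cdot\|\A^n_x\|$ are established simultaneously. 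Everything else is bookkeeping with the continuous functions $\nu,\hat\nu$ and compactness of $\M$.
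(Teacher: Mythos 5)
Your proposal is correct and follows essentially the same route as the source the paper cites for this statement (\cite[Proposition 4.2]{KS13}, cf.\ \cite{ASV}); the paper itself gives no proof, and your telescoping identity (with $\A^n_{x,y}:=(\A^n_y)^{-1}\circ\A^n_x$), namely $\A^{n+1}_{x,y}-\A^n_{x,y}=(\A^n_y)^{-1}\circ\bigl(A(y_n)^{-1}A(x_n)-\Id\bigr)\circ\A^n_x$, combined with the iterated fiber-bunching bound $\|\A^n_x\|\,\|(\A^n_x)^{-1}\|\,\nu_n(x)^\beta\le\theta^n$, is exactly that standard argument. The one point to streamline: the two-orbit estimate $\|(\A^n_y)^{-1}\|\cdot\|\A^n_x\|\le C\,\theta^n\,\nu_n(x)^{-\beta}$ requires no simultaneous induction and is not circular, since $\|(\A^n_y)^{-1}\|\le\prod_{k<n}\|A(y_k)^{-1}\|\le\prod_{k<n}\|A(x_k)^{-1}\|\bigl(1+C\,\dist(x_k,y_k)^\beta\bigr)$ and the correction product is uniformly bounded because $\dist(x_k,y_k)\le\nu_k(x)\,\dist_{W^s}(x,y)$ is summable (the same device the paper uses in the proof of Lemma \ref{Anxx'}); also, in your uniqueness step the relation coming from (H3) is $\Delta_{x,y}=(\A^n_y)^{-1}\circ\Delta_{f^nx,f^ny}\circ\A^n_y$ rather than its reverse, which is harmless since the estimate only uses $\|\A^n_y\|\,\|(\A^n_y)^{-1}\|$.
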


\begin{remark} \label{weaker}
This proposition holds under a slightly weaker fiber bunching assumption
\cite[Proposition 4.4]{S14}:   
there exist $\theta<1$ and $L$ such that for all $x\in\M$, $n\in \N$,
\begin{equation}\label{weak fiber bunched}
\| \A_x^n\|\cdot \|(\A_x^n)^{-1}\| \cdot  (\nu^n_x)^\beta < L\, \theta^n \;\text{ and}\quad
\| \A_x^{-n}\|\cdot \|(\A_x^{-n})^{-1}\| \cdot  (\hat \nu^{-n}_x)^\beta < L\, \theta^n,
\end{equation}
where 
$ \;\nu^n_x$, $\hat \nu^n_x$ are defined as in \eqref{def nu_n}.
In fact, all results in this paper hold under this version of fiber 
bunching assumption.
\end{remark}

\begin{definition} \label{standard holonomy}
A stable holonomy for a cocycle $\A$ satisfying  \eqref{as limits} is 
called {\em standard}.
\end{definition}
By definition, the standard stable holonomy of $\A$ is unique, if it exists. 
By the proposition, the only $\beta$-H\"older  stable holonomy for a $\beta$
 fiber bunched cocycle  is the standard one. However, there are non-standard
stable holonomies of lower regularity even for a constant fiber bunched
cocycle over an Anosov automorphism. 

 \vskip.1cm
 
We use similar definitions for an unstable holonomy $H^{\A,u}$. 
As in Proposition \ref{holonomies exist}, any $\beta$ fiber bunched cocycle  
$\A$ has the standard unstable holonomy obtained as 
$$
H^{\A,u}_{x,\,y}\,=
\underset{n\to\infty}{\lim} \left( (\A^{-n}_y)^{-1} \circ (\A^{-n}_x) \right)
=
\underset{n\to\infty}{\lim} 
\left(\A^n_{f^{-n}y} \circ (\A^{n}_{f^{-n}x})^{-1} \right), \quad y\in W^u(x),
$$
It satisfies  (H1,2,4,) above with $y\in W^u(x)$ and 
\vskip.15cm
(H3$'$)  $H^{\A,u}_{x,\,y}=
(\A^{-n}_y)^{-1} \circ H^{\A,u}_{f^{-n}x ,\,f^{-n}y}  \circ \A^{-n}_x\;$ 
 for all $n\in \N$.

\vskip.2cm

We establish global H\"older continuity of the stable holonomy for fiber bunched cocycles. A similar result holds for the unstable holonomy.

\begin{theorem}\label{alpha Holder}
Suppose that a cocycle $\A$ is $\beta$ fiber bunched. Then  there exists   
$\alpha$,  $0< \alpha < \beta$, such that the standard holonomy $H^{\A,s}$ 
as in \eqref{as limits} is globally $\a$-H\"older in the following sense. 
For any $R>0$ there exist $\delta>0$ and $C>0$ so that 
\vskip.2cm

If $\,y\in W^s_{R}(x)$, $\,y''\in W^s_{R}(x'')$, $\,\dist(x,x'')<\delta$
and $\dist(y, y'')<\delta$, then 
\vskip.1cm
$d(H^s_{x,y}, H^s_{x'',y''}) \le C \max \,\{ \dist(x, x'')^\a, \dist(y, y'')^\a \}$.
\end{theorem}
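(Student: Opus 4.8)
\emph{Proof plan.} The plan is to play the exponential convergence of the defining limit \eqref{as limits} against a crude growth estimate in the transverse direction, and then optimize over a free integer $n$. By Lemma \ref{eqdist} it suffices to bound the operator norm $\|H^{\A,s}_{x,y}-H^{\A,s}_{x'',y''}\|$: by (H2) we have $(H^{\A,s}_{x,y})^{-1}=H^{\A,s}_{y,x}$ with $x\in W^s_R(y)$, and the telescoping argument proving Proposition \ref{holonomies exist} shows that $\|(\A^n_y)^{-1}\circ\A^n_x\|$ is bounded uniformly over $n\ge0$ and $y\in W^s_R(x)$; hence the holonomies $H^{\A,s}_{x,y}$ and their inverses are bounded by some $M=M(R)$, which is the hypothesis needed to apply Lemma \ref{eqdist}. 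That same argument gives a \emph{uniform} convergence rate: there are $\theta<1$ and $C_1=C_1(R)$ with
\begin{equation*}
\| (\A^n_y)^{-1}\circ \A^n_x - H^{\A,s}_{x,y}\| \le C_1\,\theta^{\,n}\qquad\text{for all } n\ge 0,\ x\in\M,\ y\in W^s_R(x),
\end{equation*}
and likewise for the pair $(x'',y'')$. (The point here is that the expansion of $(\A^n_y)^{-1}$ is compensated by the stable contraction hidden in $\dist_{W^s}(f^nx,f^ny)^{\b}$, via the fiber bunching \eqref{fiber bunched}.)

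Next I would compare the two finite products $(\A^n_y)^{-1}\circ\A^n_x$ and $(\A^n_{y''})^{-1}\circ\A^n_{x''}$. Put $\Gamma_0=\max_{z\in\M}\max\{\|A(z)\|,\|A(z)^{-1}\|\}\ge1$, so $\|\A^n_z\|,\|(\A^n_z)^{-1}\|\le\Gamma_0^{\,n}$, and choose $\e_0>0$, $\Lambda\ge1$ with $\dist(fz,fz')\le\Lambda\,\dist(z,z')$ whenever $\dist(z,z')<\e_0$; then $\dist(f^jx,f^jx'')\le\Lambda^j\dist(x,x'')$ for $0\le j<n$ as long as $\Lambda^n\dist(x,x'')<\e_0$, and similarly for $y,y''$. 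Expanding $\A^n_x-\A^n_{x''}$ and $(\A^n_y)^{-1}-(\A^n_{y''})^{-1}$ as standard telescoping sums and using the H\"older bound $\|A(f^jx)-A(f^jx'')\|\le c\,\dist(f^jx,f^jx'')^{\b}$ gives, with $\rho:=\max\{\dist(x,x''),\dist(y,y'')\}$ and $\Gamma_2:=\Gamma_0^2\Lambda^{\b}$,
\begin{equation*}
\| (\A^n_y)^{-1}\circ\A^n_x-(\A^n_{y''})^{-1}\circ\A^n_{x''}\| \le C_3\,\Gamma_2^{\,n}\,\rho^{\b},
\end{equation*}
valid once $\Lambda^n\rho<\e_0$. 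Combining with the previous display, for every such $n$,
\begin{equation*}
\| H^{\A,s}_{x,y}-H^{\A,s}_{x'',y''}\| \le 2C_1\,\theta^{\,n}+C_3\,\Gamma_2^{\,n}\,\rho^{\b}.
\end{equation*}

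Finally I would balance the two terms by taking $n=\lfloor \b\log(1/\rho)/\log(\Gamma_2/\theta)\rfloor$. Since $\Gamma_2>1>\theta$ this is $\ge1$ once $\rho$ is below a threshold, and it makes both $\theta^{\,n}$ and $\Gamma_2^{\,n}\rho^{\b}$ bounded by a constant multiple of $\rho^{\a}$, where
\begin{equation*}
\a=\frac{\b\log(1/\theta)}{\log(\Gamma_2/\theta)}\in(0,\b).
\end{equation*}
It remains to choose $\delta=\delta(R)$ so small that this $n$ obeys $\Lambda^n\rho<\e_0$: since $\Lambda^n\rho\le\rho^{\,1-\b\log\Lambda/\log(\Gamma_2/\theta)}$ and $\log(\Gamma_2/\theta)=2\log\Gamma_0+\log(1/\theta)+\b\log\Lambda>\b\log\Lambda$, this exponent is positive, so $\Lambda^n\rho\to0$ as $\rho\to0$ and a suitable $\delta$ exists. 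We then get $\|H^{\A,s}_{x,y}-H^{\A,s}_{x'',y''}\|\le C_4\rho^{\a}$, hence $d(H^{\A,s}_{x,y},H^{\A,s}_{x'',y''})\le C\rho^{\a}=C\max\{\dist(x,x'')^{\a},\dist(y,y'')^{\a}\}$ by Lemma \ref{eqdist}. The main obstacle is securing the uniformity of the rate $C_1\theta^{\,n}$ over all $y\in W^s_R(x)$ (extracted from the fiber-bunching telescoping, where the stable contraction cancels the growth of the cocycle); granted that, the transverse comparison costs only the harmless factor $\Gamma_2^{\,n}$, and positivity of $\a$ is automatic because $\Gamma_2/\theta>1$.
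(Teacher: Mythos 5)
Your proposal is correct, but it takes a genuinely different route from the paper's proof. You argue ``softly'': uniform exponential convergence $\|(\A^n_y)^{-1}\circ\A^n_x-H^{\A,s}_{x,y}\|\le C_1\theta^n$ for $y\in W^s_R(x)$ (this is indeed what the telescoping proof of Proposition \ref{holonomies exist} gives, with $C_1=C_1(R)$, since the increment at step $n$ is controlled by $\prod_{k<n}\|A(f^kx)\|\,\|A(f^kx)^{-1}\|\,\nu(f^kx)^{\beta}\le\theta^n$ up to a bounded correction), plus a crude exponential bound $C_3\Gamma_2^{\,n}\rho^{\beta}$ on how the finite-time products move when the base points move, and then optimization over $n$; the only bookkeeping point is that the sum $\sum_{j<n}\Lambda^{j\beta}$ in your telescoping either uses $\Lambda>1$ (automatic for a partially hyperbolic $f$) or forces you to absorb a factor $n$ into a slightly larger $\Gamma_2$, which merely shrinks $\alpha$ and does not affect the structure. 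The paper instead exploits the geometry of the partially hyperbolic splitting: after using (H3) to reduce to $y\in W^s_{\delta_0}(x)$, it introduces a transversal $\Sigma_x$ tangent to $E^{cu}$, intermediate points $x',y'$, chooses $n$ adapted to $\dist(x,x')\approx\nu_n(x)\hat\mu_n(x)$, and estimates $H^{\A,s}_{x',y'}\circ(H^{\A,s}_{x,y})^{-1}$ through (H3), (H4) and Lemmas \ref{points}, \ref{Anxx'}, \ref{Anyy'}, where the contraction $\gamma$ along the pushed transversals and the bound $\hat\mu^{-1}$ on the maximal expansion of $f$ play the role of your crude constants $\Lambda$ and $\Gamma_0$. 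What each approach buys: yours is shorter, needs no transversal construction, and applies verbatim to arbitrary nearby quadruples $(x,y,x'',y'')$; the paper's yields the sharper, dynamically meaningful exponent, namely $\alpha$ determined by $\theta<(\hat\mu\,\nu)^{\alpha}$, depending only on the base system and the relative non-conformality of $\A$ (as emphasized right after the theorem), whereas your $\alpha=\beta\log(1/\theta)/\log(\Gamma_2/\theta)$ degrades with $\sup_z\max\{\|A(z)\|,\|A(z)^{-1}\|\}$ and with the Lipschitz constant of $f$. Since the theorem only asserts the existence of some $\alpha\in(0,\beta)$, your argument does prove the stated result.
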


\noindent The choice of the H\"older exponent $\alpha$ is explicit and is
 described in the beginning of the proof. It depends on the system in the 
 base and on the ``relative degree of non-conformality" of the cocycle $\A$.

\vskip.1cm

In the absence of fiber bunching, natural examples of cocycles with standard 
holonomies are given by small perturbation of a constant $GL(d,\R)$-valued 
cocycle. 

\begin{proposition} \label{constant}
Let $\A$ be a constant $GL(d,\R)$-valued cocycle generated by $A$. 
If $B : \M \to GL(d,\R)$ is H\"older continuous and is sufficiently $C^0$ 
close to $A$, then the cocycle generated by $B$ has H\"older 
continuous standard holonomies.
\end{proposition}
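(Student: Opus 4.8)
Here is how I would try to prove Proposition~\ref{constant}.

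\medskip
\noindent\textbf{Proof plan.}
The idea is to reduce everything to the block structure of the constant matrix $A$. Let $\R^d=V_1\oplus\cdots\oplus V_k$ be the $A$-invariant decomposition into the sums of generalized eigenspaces grouped by modulus of eigenvalue, so that every eigenvalue of $A_i:=A|_{V_i}$ has the same modulus $\rho_i$ and $\rho_1>\rho_2>\cdots>\rho_k>0$. Since all eigenvalues of $A_i$ (resp.\ of $A_i^{-1}$) have modulus $\rho_i$ (resp.\ $\rho_i^{-1}$), one has $\|A_i^{\pm n}\|\le c\,n^{d}\rho_i^{\pm n}$, hence $\|A_i^n\|\cdot\|A_i^{-n}\|\le c^2 n^{2d}$ is only polynomial in $n$; consequently the constant cocycle generated by $A_i$ satisfies the weak fiber bunching condition \eqref{weak fiber bunched} for \emph{every} $\b>0$ (both left-hand sides are $\le c^2 n^{2d}\|\nu\|_\infty^{\,n\b}$, resp.\ $\le c^2n^{2d}\|\hat\nu\|_\infty^{\,n\b}$). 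Moreover the constant splitting $\R^d=V_1\oplus\cdots\oplus V_k$ is a dominated splitting for $\A$ over $f$ (the $i$-th summand dominates the lower ones because $\rho_i>\rho_{i+1}$), and its unstable flag is $V_1\subset V_1\oplus V_2\subset\cdots$.

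\medskip
First I would invoke robustness of domination: for linear cocycles over a fixed base, a dominated splitting, together with its invariant sub-bundles, persists under $C^0$-small perturbations of the cocycle, and the perturbed sub-bundles are H\"older (the base $f$ is $C^{1+\e}$) and $O(\e)$-close to the unperturbed ones. Hence, if $B$ is H\"older and close enough to $A$ in $C^0$, the cocycle $\B$ carries a $\B$-invariant continuous splitting $\E=E_1\oplus\cdots\oplus E_k$ with each $E_i$ H\"older and uniformly close to $V_i$. Trivializing $E_i$ by a H\"older isomorphism close to the identity, $\B|_{E_i}$ becomes a cocycle generated by matrices lying within $O(\e)$ of the single matrix $A_i$; by continuity of the joint spectral radius this gives uniform bounds $\|(\B|_{E_i})^{\pm n}_x\|\le c\,(\rho_i^{\pm1}+\delta(\e))^n$ with $\delta(\e)\to0$. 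Therefore $\|(\B|_{E_i})^n_x\|\cdot\|((\B|_{E_i})^n_x)^{-1}\|\le c^2(1+\delta'(\e))^n$, and, combined with $\nu^n_x\le\|\nu\|_\infty^n<1$ and $\hat\nu^{-n}_x\le\|\hat\nu\|_\infty^n<1$, for $\e$ small this yields the weak fiber bunching condition \eqref{weak fiber bunched} for $\B|_{E_i}$. By Proposition~\ref{holonomies exist} and Remark~\ref{weaker}, applied in the bundle setting, each $\B|_{E_i}$ has a H\"older standard stable holonomy $H^{\B|_{E_i},s}$ and a H\"older standard unstable holonomy $H^{\B|_{E_i},u}$.

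\medskip
It remains to assemble these into standard holonomies for $\B$ itself, i.e.\ to show the limits \eqref{as limits}, \eqref{as limits 2} exist for $\B$ and are H\"older. For this I would choose a H\"older map $P\colon\M\to GL(d,\R)$ with $P(x)$ close to $\Id$, taking the constant flag $V_1\subset V_1\oplus V_2\subset\cdots$ to the $\B$-invariant flag $E_1\subset E_1\oplus E_2\subset\cdots$, and normalized so that $P(x)-\Id$ maps $V_i$ into $\bigoplus_{j>i}V_j$; then $\tilde\B_x:=P(fx)^{-1}\B_x P(x)$ is block upper triangular with diagonal blocks $\B|_{E_i}$. For the upper triangular cocycle $\tilde\B$ the products $(\tilde\B^n_y)^{-1}\tilde\B^n_x$ converge geometrically: writing the increment as $(\tilde\B^n_y)^{-1}\bigl(\tilde B(f^ny)^{-1}\tilde B(f^nx)-\Id\bigr)\tilde\B^n_x$ and expanding in blocks, every amplification ratio $\rho_b/\rho_a$ that appears satisfies $a\le b$ (hence $\rho_b/\rho_a\le1$) because of the triangular structure, so the increments are bounded by a polynomial in $n$ times $\dist(f^nx,f^ny)^{\b'}\le c\,\|\nu\|_\infty^{\,n\b'}$; the limit $H^{\tilde\B,s}$ is block upper triangular with diagonal blocks $H^{\B|_{E_i},s}$ and is H\"older by a telescoping estimate as in Proposition~\ref{holonomies exist} (similarly for $W^u$). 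Finally one transports back: since $(\B^n_y)^{-1}\B^n_x=P(y)^{-1}\bigl[(\tilde\B^n_y)^{-1}\tilde\B^n_x+(\tilde\B^n_y)^{-1}\Psi_n\tilde\B^n_x\bigr]P(x)$ with $\Psi_n=P(f^ny)P(f^nx)^{-1}-\Id$ (which, by the normalization of $P$, maps $V_i$ into $\bigoplus_{j>i}V_j$) and $\|\Psi_n\|\le c\,\dist(f^nx,f^ny)^{\g}$, we would get $H^{\B,s}_{x,y}=P(y)^{-1}H^{\tilde\B,s}_{x,y}P(x)$, and likewise $H^{\B,u}$, provided the correction terms $(\tilde\B^n_y)^{-1}\Psi_n\tilde\B^n_x$ tend to $0$.

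\medskip
The last point is where the main difficulty lies. The correction terms now pick up ratios $\rho_b/\rho_a$ with $b<a$, which can grow like $(\rho_1/\rho_k)^n$, and one must show that the H\"older decay of the invariant flag $E_1\subset\cdots\subset E_k$ along $W^s$ and $W^u$ — which controls $\|\Psi_n\|$ — is sharp enough to defeat this growth. This forces one to work with the precise H\"older exponents of the invariant flag (governed by the domination rates $\rho_i/\rho_j$ of $A$ against the stable/unstable contraction and expansion rates of $f$) together with the polynomial-in-$n$ refinements of the block norm bounds, and it is here that the explicit smallness of $B-A$ must be used; I expect this estimate to be the bulk of the argument. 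Granting it, the limits \eqref{as limits}, \eqref{as limits 2} exist, and the H\"older regularity of $H^{\B,s}$ and $H^{\B,u}$ follows from that of the $H^{\B|_{E_i},s}$, $H^{\B|_{E_i},u}$ and of $P$.
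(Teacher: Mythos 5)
The first half of your argument --- splitting $\R^d$ by the moduli of the eigenvalues of $A$, persistence of this dominated splitting under $C^0$-small H\"older perturbations, near-conformality of the restrictions $\B|_{E_i}$, the weak fiber bunching condition \eqref{weak fiber bunched}, and Remark \ref{weaker} --- is exactly the paper's proof, including the caveat that the H\"older exponent may drop. Where you part ways with the paper is the assembly step. The paper does not attempt to prove convergence of the raw products $(\B^n_y)^{-1}\circ \B^n_x$ in the original trivialization: it works in the bundle setting of \cite{KS13} (see the remark at the end of Section 2.2), where each restriction $\B|_{E_i}$ is a cocycle on a H\"older sub-bundle and its standard holonomy is a bundle holonomy; since the splitting $E_1\oplus\dots\oplus E_k$ is $\B$-invariant, the direct sum of these block holonomies already satisfies (H1)--(H3) for $\B$, and (H4) with an exponent lowered by that of the splitting. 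For this reading of the conclusion no correction-term estimate is needed at all, and your conjugation by $P$ and the triangular reduction are superfluous (note also that the persistence theorem gives the full splitting, so $\tilde\B$ could be taken block diagonal rather than block triangular).

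Your last two paragraphs instead aim at the stronger assertion that the limits \eqref{as limits} and \eqref{as limits 2} exist in the fixed frame, and this is where the genuine gap lies: the bound on $(\tilde\B^n_y)^{-1}\Psi_n\tilde\B^n_x$ is precisely what you leave unproved (``granting it''), and it cannot be closed as you set it up. The H\"older exponent of the invariant flag/splitting is governed by the domination gaps of $A$ against the full expansion/contraction rates of $f$, and in general it is too small for $(\rho_1/\rho_k)^n$ times the H\"older decay of $\Psi_n$ to tend to zero; indeed, without fiber bunching the fixed-frame limits can genuinely fail. For $B(x)=\left(\begin{smallmatrix}\rho_1&\phi(x)\\ 0&\rho_2\end{smallmatrix}\right)$ with $\phi$ small and $\beta$-H\"older and $(\rho_1/\rho_2)\,\hat\nu^{\beta}>1$, an exact computation shows that the products in \eqref{as limits 2} contain, up to a constant factor, the sums $\sum_m (\rho_1/\rho_2)^m\bigl(\phi(f^{-m}y)-\phi(f^{-m}x)\bigr)$ with $y\in W^u(x)$, which need not converge even though $B$ is arbitrarily $C^0$-close to the constant matrix $\mathrm{diag}(\rho_1,\rho_2)$. (A smaller slip: triangularizing along the fast flag makes the middle factor of each increment upper triangular and so helps only for the stable holonomy; for $W^u$ you would need the complementary flag, so ``similarly for $W^u$'' does not apply verbatim.) The upshot is that the holonomies should be produced, and the statement understood, blockwise on the $\B$-invariant splitting as the paper does, rather than via convergence of the products in the original frame.
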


%%%%%%%%%%%%%%%%%%%%%%%%%%%%%%%%%%%%
%%%%%%%%%%%    COHOMOLOGY       %%%%%%%%%%%%
%%%%%%%%%%%%%%%%%%%%%%%%%%%%%%%%%%%%%

\section{Cohomology of cocycles}

First we consider the question whether a measurable conjugacy between 
two cocycles is continuous.  For  non-commutative cocycles,
the answer is not always positive, even when both cocycles are  fiber bunched.  Indeed, in \cite[Section 9]{PW}, M.~Pollicott and 
C. P. Walkden constructed an example of two smooth $GL(2,\R)$-valued cocycles over an Anosov toral automorphism
%of the form 
%$$
%$\A(x)=\left[ \begin{array}{cc} a(x) & b \\ 0 & 1 \end{array} \right] 
%\quad\text{and} \quad
%\B(x) = \left[ \begin{array}{cc} a(x) & 0 \\ 0 & 1 \end{array} \right]
%$$
that are measurably (with respect to the Lebesgue measure), but not continuously cohomologous.
The cocycles can be made  arbitrarily close to the identity and, in particular, fiber bunched.
We establish  continuity of a measurable conjugacy for  fiber bunched 
cocycles under the assumption that one of them is uniformly quasiconformal. The example above shows that this assumption is close to optimal.  
%This is the first result on regularity of a measurable conjugacy for non-commutative cocycles over partially hyperbolic systems.

\begin{definition} \label{quasiconformal def}
A cocycle $\B$ is called {\em uniformly quasiconformal}
 if there exists a number $K(\B)$ such that the quasiconformal distortion 
 satisfies 
\begin{equation}\label{quasiconformal}
K_\B(x,n) \overset{def}{=}\|\B^n_x\| \cdot \|(\B^n_x)^{-1}\|\le K(\B) \quad\text{for all }
x\in \M \text{  and }n\in \Z.
\end{equation}
 If $\,K_\B(x,n)=1$ for all $x$ and $n$,
 the cocycle is said to be \em{conformal}.
 \end{definition}
 
 Clearly, H\"older continuous conformal cocycles are fiber bunched, 
 and so are all sufficiently high iterates of uniformly quasiconformal  
 cocycles.

\begin{theorem} \label{measurable cohomology}
Let $\A$ be a cocycle with  standard holonomy and let $\B$ be a uniformly quasiconformal H\"older cocycle.
Let $\m$ be the invariant volume for  $f$, 
and let $C$ be a $\m$-measurable conjugacy between $\A$ and $\B$.  
If $V$ is finite dimensional then $C$ coincides on a set of full measure with a continuous  conjugacy
that intertwines the standard holonomies of $\A$ and $\B$.
\end{theorem}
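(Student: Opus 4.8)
The plan is to split the argument into two parts: first I would show that $C$ agrees $\m$-a.e.\ with a function that intertwines the stable and unstable holonomies of $\A$ and $\B$ along the leaves, and then I would use accessibility of $f$ to promote this to a genuine continuous conjugacy on all of $\M$. A few preliminary remarks. Since $\B$ is uniformly quasiconformal and H\"older, a high enough iterate is fiber bunched, so by Proposition \ref{holonomies exist} $\B$ has standard stable and unstable holonomies $H^{\B,s},H^{\B,u}$ (alternatively, the limits $(\B^n_y)^{-1}\circ\B^n_x$ converge directly from the H\"older condition, contraction in the base, and boundedness of $K_\B$). For $x,y$ on a common stable leaf, $(\A^n_y)^{-1}\circ\A^n_x\to H^{\A,s}_{x,y}$ and $(\B^n_y)^{-1}\circ\B^n_x\to H^{\B,s}_{x,y}$ in $GL(V)$; since the limits are invertible the inverses converge as well, and $\sup_n\|(\A^n_y)^{-1}\circ\A^n_x\|$, $\sup_n\|(\B^n_x)^{-1}\circ\B^n_y\|$ are finite. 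I will also use that under the standing assumptions $f$ is ergodic for $\m$, and that $W^s,W^u$ are absolutely continuous.

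For Step 1, iterate the conjugacy equation to get $C(f^nx)=\A^n_x\circ C(x)\circ(\B^n_x)^{-1}$. By Lusin's theorem pick a compact $\Lambda$ with $\m(\Lambda)>\tfrac12$ on which $C,C^{-1}$ are bounded by some $M$ and $C$ is uniformly continuous; by Birkhoff's theorem, for $\m$-a.e.\ $z$ the visit times $\{n\ge0:f^nz\in\Lambda\}$ have density $>\tfrac12$. Fix such an $x$ and, by Fubini along the absolutely continuous foliation $W^s$, such a $y\in W^s(x)$; then the common visit times $T=\{n:f^nx\in\Lambda,\ f^ny\in\Lambda\}$ form an infinite set, along which $\mathrm{dist}(f^nx,f^ny)\to0$ and hence $\|C(f^nx)-C(f^ny)\|\to0$. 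From $C(f^nx)=\A^n_x\circ C(x)\circ(\B^n_x)^{-1}$ and $\|C(f^nx)^{\pm1}\|\le M$ one reads off $\|(\A^n_x)^{-1}\|\le M\|C(x)\|\,\|(\B^n_x)^{-1}\|$ for $n\in T$, so using $K_\B(x,n)\le K(\B)$ the products $\|(\A^n_x)^{-1}\|\cdot\|\B^n_x\|$, hence also $\|(\A^n_y)^{-1}\|\cdot\|\B^n_y\|$, stay bounded along $T$. Consequently, putting $\varepsilon_n=C(f^ny)-C(f^nx)$ and conjugating $C(f^ny)=\A^n_y\circ C(y)\circ(\B^n_y)^{-1}$ by $(\A^n_y)^{-1}$ on the left and $\B^n_y$ on the right, the term $(\A^n_y)^{-1}\varepsilon_n\B^n_y\to0$ along $T$, while $(\A^n_y)^{-1}\circ\A^n_x\to H^{\A,s}_{x,y}$ and $(\B^n_x)^{-1}\circ\B^n_y\to(H^{\B,s}_{x,y})^{-1}$; passing to the limit yields $C(y)\circ H^{\B,s}_{x,y}=H^{\A,s}_{x,y}\circ C(x)$. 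Running the same argument for $f^{-1}$ gives the analogous identity for $H^{\A,u},H^{\B,u}$. A routine Fubini-plus-transitivity argument (using (H2)) then produces a set $\G$ with $\m(\G)=1$ on which both identities hold for \emph{every} pair $x,y\in\G$ on a common stable, resp.\ unstable, leaf.

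For Step 2, fix a density point $x_0\in\G$, and for $x\in\M$ choose an $su$-path $\gamma$ from $x_0$ to $x$ and define $\tilde C(x)$ by transporting $C(x_0)$ along $\gamma$: compose the holonomies of $\A$ along the legs of $\gamma$ on the target side and the inverse holonomies of $\B$ on the source side (e.g.\ for a single stable leg, $\tilde C(x)=H^{\A,s}_{x_0,x}\circ C(x_0)\circ(H^{\B,s}_{x_0,x})^{-1}$). I would then verify: (i) $\tilde C$ is well defined, i.e.\ independent of $\gamma$ — equivalently, transporting around an $su$-loop at $x_0$ returns $C(x_0)$; this is obtained by perturbing the corners of the loop to land in $\G$ (abundant by absolute continuity of $W^s,W^u$), applying the Step 1 identities along each leg, and using continuity of the holonomies to pass to the limit; (ii) $\tilde C$ is continuous, since the holonomies of $\A$ and $\B$ are continuous (indeed H\"older, by Theorem \ref{alpha Holder}) and nearby points are joined by $su$-paths varying continuously with their endpoints; (iii) $\tilde C=C$ on $\G$ by Step 1, so the equation $\A_x=\tilde C(fx)\circ\B_x\circ\tilde C(x)^{-1}$ holds $\m$-a.e., hence everywhere by continuity of all factors. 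Thus $\tilde C$ is a continuous conjugacy coinciding with $C$ a.e., and it intertwines the standard holonomies by construction.

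The main obstacle is step (i) above — path-independence of the holonomy transport. Making the ``perturb the corners into $\G$'' argument precise requires combining the a.e.\ intertwining of Step 1 with the absolute continuity and local quasi-isometry of the stable/unstable holonomy maps between nearby leaves, in a Hopf-type argument; this is the non-commutative counterpart of the corresponding step in Wilkinson's treatment of real-valued cocycles, and the non-commutativity forces careful bookkeeping of the order in which holonomies of $\A$ and of $\B$ are composed. A secondary, routine point is checking that $\B$ has standard holonomies from its uniform quasiconformality, and that the identities of Step 1 can be upgraded to a genuinely leafwise-saturated full-measure set $\G$.
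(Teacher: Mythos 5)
Your Step 1 is essentially the paper's own argument: Lusin plus Birkhoff give a set of return times along which both orbits are in the good compact set, uniform quasiconformality of $\B$ is what keeps the error term bounded (the paper bounds $\|(\B^n_y)^{-1}\|\cdot\|\B^n_x\|\le C_8K(\B)$ via Lemma \ref{F_iS}, you bound $\|(\A^n_y)^{-1}\|\cdot\|\B^n_y\|$ using the boundedness of $C^{\pm1}$ at return times; both work), and passing to the limit along the subsequence gives the intertwining identity for pairs of points of a full-measure set on a common stable (resp.\ unstable) leaf. In fact you do not even need Fubini or absolute continuity here: once $x$ and $y$ each visit the Lusin set with frequency greater than $1/2$, the common return times are automatically infinite, which is exactly how the paper gets the identity for \emph{all} pairs in the full-measure set at once.

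The genuine gap is in Step 2. The paper does not construct $\tilde C$ by holonomy transport from a base point; it observes that the a.e.\ identities $C(y)=H^{\A,s}_{x,y}\circ C(x)\circ (H^{\B,s}_{x,y})^{-1}$ (and the $u$-version) mean that $C$ is essentially $s$- and $u$-continuous in the sense of Avila--Santamaria--Viana, and then invokes their Theorem E (valid precisely for accessible, center-bunched $f$) to conclude that $C$ agrees a.e.\ with a continuous function. Your Step 2 amounts to reproving that theorem, and the sketch does not do it. Concretely: (a) the path-independence argument ``perturb the corners of the $su$-loop to land in $\G$ and pass to the limit'' is exactly the hard step -- a full-measure set need not meet the particular leaves of a given loop where you need it, perturbing one corner moves all subsequent legs onto different leaves, and controlling the resulting endpoint requires quantitative information that bare accessibility does not provide; in \cite{ASV} this is where center bunching and julienne-type density points enter, and no Hopf-type shortcut is known at this level of generality. (b) Your continuity claim (ii) uses that nearby points are joined by $su$-paths varying continuously with their endpoints; that is a form of local (H\"older) accessibility, which the paper explicitly does \emph{not} assume and which does not follow from accessibility. (c) Even granting well-definedness, the identification $\tilde C=C$ on $\G$ does not follow from Step 1: the $su$-path from $x_0$ to a point of $\G$ passes through intermediate corner points that need not lie in $\G$, so the leafwise identities cannot be applied leg by leg without a further saturation argument. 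So the proposal is missing the key ingredient that the paper supplies by citing \cite[Theorem E]{ASV}; either that theorem must be invoked or its proof reproduced, and the remainder of your argument (that the continuous representative is a conjugacy and intertwines the standard holonomies) then goes through as in the paper.
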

When we speak of a {\em holonomy} for a cocycle $\A$ 
we mean a pair of a stable  holonomy and an unstable holonomy, 
$\H^{\A}=\{ \H^{\A,s} ,\H^{\A,u} \}$.
When we say that a conjugacy {\em intertwines $H^{\A}$ and $H^{\B}$} we mean
mean that it intertwines both the stable and the unstable holonomies as in the following definition.
\begin{definition}
Suppose that $H^{\A,s}$ and $H^{\B,s}$ are stable holonomies for 
cocycles $\A$ and $\B$. We say that a conjugacy $C$ between 
$\A$ and $\B$  {\em intertwines 
$H^{\A,s}$ and $H^{\B,s}$} if
\begin{equation}\label{intertwines}
H_{x,y}^{\A,s}=C(y)\circ H_{x,y}^{\B,s}\circ C(x)^{-1}\quad \text{for all }x,y \in \M
\text{ such that }y\in W^s(x).
\end{equation}
\end{definition}

Intertwining the standard holonomies of cocycles is an important property 
of a conjugacy $C$. It is clear from the proof that it implies continuity  of $C$. 
Further, it can be uses to study higher regularity of the conjugacy, see
\cite{NT} for results on non-commutative cocycles over hyperbolic systems and
\cite{W} for real-valued cocycles over accessible partially hyperbolic systems.
In contrast to real-valued cocycles, however, even continuous conjugacy between fiber bunched cocycles does not necessarily intertwine their 
standard holonomies.

\begin{proposition} \label{nonreg example}
For any $0<\beta'<\beta\le1$,
there exist a smooth cocycle $\A$ and a constant cocycle $\B$ over 
an Anosov  automorphism of $\,\T^2$ that are $\beta$ fiber-bunched 
and  conjugate via a $\beta'$-H\"older function $C$, but there is no 
$\beta$-H\"older conjugacy between $\A$ and $\B$ and 
no conjugacy intertwines their standard holonomies.
\end{proposition}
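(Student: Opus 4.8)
The plan is to construct the example explicitly over a hyperbolic toral automorphism $f$ of $\T^2$ by taking $\A$ to be a cocycle in upper-triangular $2\times 2$ form and $\B$ a constant diagonal (or constant identity) cocycle, exploiting the fact that the stable and unstable holonomies of a triangular cocycle are computable from the coboundary-type series along the stable and unstable leaves. Concretely, I would look for a generator of the form
$$
A(x)=\begin{pmatrix} 1 & \varphi(x) \\ 0 & 1 \end{pmatrix},
\qquad
B=\begin{pmatrix} 1 & 0 \\ 0 & 1 \end{pmatrix},
$$
so that both cocycles are isometric on the diagonal and hence fiber bunched with $\beta=1$ (the unipotent part does not destroy the bound $\|\A(x)\|\cdot\|\A(x)^{-1}\|\cdot\nu(x)<1$ once $\varphi$ is small, and similarly for $\hat\nu$). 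The cocycle $\A$ is then cohomologous to $\B$ via
$$
C(x)=\begin{pmatrix} 1 & u(x) \\ 0 & 1 \end{pmatrix}
$$
precisely when $u$ solves the scalar cohomological equation $\varphi(x)=u(fx)-u(x)$. The idea is to choose $\varphi$ smooth with $\int\varphi\,d\m=0$ so that a solution $u$ exists by Livšic, but to arrange — by a suitable choice of Fourier coefficients of $\varphi$ concentrated so that the decay is exactly borderline — that the unique (up to constant) solution $u$ is merely $\beta'$-H\"older and not better, which forces $C$ to be only $\beta'$-H\"older.

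The heart of the matter is the holonomy computation. For a triangular cocycle the standard stable holonomy, obtained from \eqref{as limits}, has the explicit form
$$
H^{\A,s}_{x,y}=\begin{pmatrix} 1 & \psi^s(x,y) \\ 0 & 1 \end{pmatrix},
\qquad
\psi^s(x,y)=\sum_{n=0}^\infty\bigl(\varphi(f^n y)-\varphi(f^n x)\bigr),
$$
which converges uniformly for $y\in W^s(x)$ because $\varphi$ is H\"older and $f^n$ contracts $W^s$; similarly the unstable holonomy is given by $\psi^u(x,y)=-\sum_{n=1}^\infty(\varphi(f^{-n}y)-\varphi(f^{-n}x))$. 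Since $\B$ is constant, $H^{\B,s}\equiv\Id$ and $H^{\B,u}\equiv\Id$. A conjugacy $C(x)=\begin{pmatrix}1 & u(x)\\0&1\end{pmatrix}$ intertwines the stable holonomies, per \eqref{intertwines}, exactly when $\psi^s(x,y)=u(y)-u(x)$ for all $y\in W^s(x)$, and intertwines the unstable ones exactly when $\psi^u(x,y)=u(y)-u(x)$ for all $y\in W^u(x)$. Both of these say that $u$ differs from the stable (respectively unstable) "primitive" of $\varphi$ by a leafwise constant. The plan is to pick $\varphi$ so that these two leafwise-integrability obstructions are genuinely different: concretely, one wants the formal sum $\psi^s$ to have a jump or non-closedness around an $su$-loop, i.e. the $su$-cycle weight of $\A$ to be nontrivial, even though $\A$ is continuously cohomologous to the constant $\B$. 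This is exactly the phenomenon flagged in the introduction ("$su$-cycle weights may be non-trivial for a cocycle continuously cohomologous to a constant one"), so I expect the construction to be built around producing a $\varphi$ with $\int\varphi\,d\m=0$ but with nonzero periodic-cycle weight along some closed $su$-path.

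Once $\varphi$ with these two features (solvable global coboundary equation, nonzero $su$-cycle functional) is in hand, the argument concludes as follows. First, any conjugacy $C'$ between $\A$ and $\B$ has the form $C'(x)=C(x)\circ Z(x)$ where $Z$ centralizes $\B$, i.e. $Z$ is an $f$-invariant section of the centralizer of $\B$ in $GL(2,\R)$; since $f$ is topologically transitive, $Z$ is constant, so up to a constant matrix there is a unique conjugacy $C$. Second, if $C$ intertwined both standard holonomies, then $u$ would be a leafwise primitive of $\varphi$ along both $W^s$ and $W^u$ simultaneously; integrating $\varphi$ along a closed $su$-path and using that $u$ is single-valued would force the $su$-cycle weight to vanish, contradicting the choice of $\varphi$. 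Multiplying $C$ by a constant matrix does not help, since a constant right factor changes $\psi^s(x,y)$ by conjugation by a fixed matrix, which cannot turn a nonzero cycle weight into zero. Hence no conjugacy intertwines the standard holonomies. Finally, to get the regularity statement with exponent exactly $\beta'$, I would invoke the standard Fourier-analytic construction (as in the classical Livšic-regularity counterexamples): choosing $\hat\varphi(k)$ supported on a lacunary sequence $k_j\to\infty$ with $|\hat\varphi(k_j)|$ decaying at the precise rate making $u=\sum_n\varphi\circ f^{-n}$-type series lie in $C^{\beta'}\setminus\bigcup_{\gamma>\beta'}C^\gamma$, while keeping $\varphi$ smooth by also controlling high-frequency behavior — or more simply by taking $\varphi$ smooth but with the cohomology solution's modulus of continuity dictated by the spectral gap of $f$. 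The main obstacle I anticipate is the simultaneous bookkeeping: arranging one scalar function $\varphi$ that is (i) smooth, (ii) a global coboundary with prescribed low regularity of the transfer function, and (iii) carries a nonzero $su$-cycle weight; verifying (iii) requires a careful analysis of the difference between the stable and unstable primitives of $\varphi$ around an $su$-loop, which is where the real work lies.
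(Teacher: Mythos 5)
There is a genuine gap, and it is structural: with your choice $B=\Id$ and $A(x)=\left(\begin{smallmatrix}1&\varphi(x)\\0&1\end{smallmatrix}\right)$ the conjugacy equation reduces to the \emph{untwisted} scalar Liv\v{s}ic equation $\varphi=u\circ f-u$, and for a transitive Anosov $f$ the Liv\v{s}ic regularity theory (Liv\v{s}ic for H\"older, de la Llave--Marco--Moriy\'on for $C^\infty$) says that any continuous solution $u$ of this equation with smooth $\varphi$ is automatically smooth. So your plan to tune the Fourier coefficients of a smooth $\varphi$ so that the transfer function is ``merely $\beta'$-H\"older and not better'' cannot succeed: either $\varphi$ is not a coboundary (and then $\A$ is not cohomologous to $\B$ at all, since boundedness of $\A^n_x=C(f^nx)C(x)^{-1}$ forces $S_n\varphi$ bounded and hence, by Gottschalk--Hedlund, a continuous coboundary), or $u$ is smooth and $C$ is Lipschitz, in which case Proposition \ref{cohomology 2} already forces $C$ to intertwine the standard holonomies. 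Moreover, your requirements (ii) and (iii) are mutually exclusive in this unipotent model: if $\varphi=u\circ f-u$ with $u$ continuous, then telescoping gives $\psi^s(x,y)=\lim_n\bigl(S_n\varphi(x)-S_n\varphi(y)\bigr)=u(y)-u(x)$ and likewise $\psi^u(x,y)=u(y)-u(x)$, so every $su$-cycle weight of the standard holonomy of $\A$ vanishes and the intertwining you are trying to obstruct holds automatically. The nontrivial-cycle-weight phenomenon quoted from the introduction cannot be produced by a unipotent perturbation of the identity cocycle.

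The paper's construction avoids exactly this by making the diagonal non-conformal: $\B=\mathrm{diag}(\mu,1)$ and $\A(x)=\left(\begin{smallmatrix}\mu&\phi(x)\\0&1\end{smallmatrix}\right)$ with $\mu=\lambda^r$, $\beta'<r<1$, where $\lambda>1$ is the expanding eigenvalue of $f$. Then a triangular conjugacy corresponds to the \emph{twisted} equation $\phi(x)=u(fx)-\mu u(x)$, for which Liv\v{s}ic regularity fails: by de la Llave's example as packaged in \cite[Theorem 5.5.3]{KN}, one can choose smooth $\phi$ so that $\A$ and $\B$ are conjugate by a $C^{r-\epsilon}$ (hence $\beta'$-H\"older) map but by no $C^{r+\epsilon}$, in particular no Lipschitz, map. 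The non-intertwining then follows by a soft argument you do not have: both cocycles are fiber bunched with $\beta=1$, so their standard holonomies satisfy (H4) with exponent $1$, i.e.\ are Lipschitz along leaves; any conjugacy intertwining them would therefore be Lipschitz along stable and unstable leaves and hence Lipschitz on $\T^2$, contradicting the nonexistence of a Lipschitz conjugacy. If you want to salvage your approach, you must introduce such a twist (unequal diagonal growth rates) — the bookkeeping of stable versus unstable ``primitives'' of $\varphi$ that you flag as the real work is precisely the part that collapses to zero in the untwisted case.
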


The next proposition gives a general sufficient condition for intertwining.

\begin{proposition} \label{cohomology 2}
Suppose that cocycles $\A$ and $\B$ are $\beta$ fiber bunched.
Then any $\beta$-H\"older conjugacy  $C$ between them  intertwines 
their standard holonomies.
\end{proposition}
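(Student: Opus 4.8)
The plan is to use the characterization of the standard holonomies as limits, \eqref{as limits}, together with the conjugacy relation \eqref{conj eq} in its iterated form, and show that the finite-$n$ approximants to the two holonomies are conjugate by $C$ for every $n$, so that the limits are conjugate by $C$ as well. Concretely, fix $x\in\M$ and $y\in W^s(x)$. For each $n\in\N$ the conjugacy relation gives $\A^n_x=C(f^nx)\circ\B^n_x\circ C(x)^{-1}$ and $\A^n_y=C(f^ny)\circ\B^n_y\circ C(y)^{-1}$. Substituting these into the approximant $(\A^n_y)^{-1}\circ\A^n_x$, the factors $C(f^nx)$ and $C(f^ny)^{-1}$ telescope with each other only after we insert the holonomy of $\B$; more precisely, the first step is the algebraic identity
\begin{equation}\label{plan id}
(\A^n_y)^{-1}\circ\A^n_x
= C(y)\circ(\B^n_y)^{-1}\circ C(f^ny)^{-1}\circ C(f^nx)\circ\B^n_x\circ C(x)^{-1}.
\end{equation}
The middle term $C(f^ny)^{-1}\circ C(f^nx)$ is not the identity, so the right-hand side is not yet $C(y)\circ\big((\B^n_y)^{-1}\circ\B^n_x\big)\circ C(x)^{-1}$; the difference is exactly what we must show goes to zero.

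The second step is therefore to control the error term $E_n:=C(f^ny)^{-1}\circ C(f^nx)-\Id$. Since $y\in W^s(x)$, the points $f^nx$ and $f^ny$ are exponentially close along $W^s$: $\dist_{W^s}(f^nx,f^ny)\le\nu^n_x\cdot\dist_{W^s}(x,y)$ up to the usual sub-exponential factors, which decays at rate governed by $\nu$. Because $C$ is assumed $\beta$-H\"older, $\|E_n\|\le c\,\dist_{W^s}(f^nx,f^ny)^\beta\le c'\,(\nu^n_x)^\beta\,\dist_{W^s}(x,y)^\beta$. On the other hand, by Lemma \ref{eqdist} and the $\beta$ fiber bunching of $\B$, the quasiconformal distortion of $\B^n_x$ and $\B^n_y$ is controlled: $\|\B^n_x\|\cdot\|(\B^n_x)^{-1}\|$ grows no faster than $\prod_{k=0}^{n-1}(\nu(f^kx)^{-\beta})$ times a summable correction, i.e. like $(\nu^n_x)^{-\beta}$ up to a bounded factor — this is precisely the content of the fiber bunching inequality \eqref{fiber bunched} iterated, and it is exactly the estimate that makes the limit \eqref{as limits} converge in the first place. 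Hence the product $\|\B^n_x\|\cdot\|(\B^n_y)^{-1}\|\cdot\|E_n\|$ is bounded by a constant times $(\nu^n_x)^{-\beta}\cdot(\nu^n_x)^{\beta}=1$ — which is not enough; what we actually need is a \emph{gap}. Here is where one uses that $\beta$ fiber bunching is a \emph{strict} inequality: after passing to a sufficiently high iterate (or absorbing the gap into the sub-exponential corrections as in Remark \ref{weaker}), $\|\B^n_x\|\cdot\|(\B^n_x)^{-1}\|\cdot(\nu^n_x)^\beta\le L\theta^n$ with $\theta<1$, so the error contribution is bounded by $c''\,\theta^n\,\dist_{W^s}(x,y)^\beta\to0$.

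The third step is to assemble these estimates. Write the right-hand side of \eqref{plan id} as
$$
C(y)\circ(\B^n_y)^{-1}\circ(\Id+E_n)\circ\B^n_x\circ C(x)^{-1}
= C(y)\circ\big((\B^n_y)^{-1}\circ\B^n_x\big)\circ C(x)^{-1}
+ C(y)\circ(\B^n_y)^{-1}\circ E_n\circ\B^n_x\circ C(x)^{-1}.
$$
As $n\to\infty$, the left-hand side of \eqref{plan id} converges to $H^{\A,s}_{x,y}$ by \eqref{as limits}; inside the first term on the right, $(\B^n_y)^{-1}\circ\B^n_x$ converges to $H^{\B,s}_{x,y}$, again by \eqref{as limits} applied to $\B$, and $C(x),C(y)$ are fixed invertible operators, so that term converges to $C(y)\circ H^{\B,s}_{x,y}\circ C(x)^{-1}$; the second term converges to $0$ by the Step-2 estimate together with the uniform bounds $\|C(x)^{-1}\|,\|C(y)\|$ on the compact manifold and the cancellation $\|(\B^n_y)^{-1}\|\cdot\|\B^n_x\|\cdot\|E_n\|\to0$ just established (using also that $\|(\B^n_y)^{-1}\|\le\|(\B^n_x)^{-1}\|$ up to a bounded factor, since $x$ and $y$ are on the same stable leaf). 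Passing to the limit yields $H^{\A,s}_{x,y}=C(y)\circ H^{\B,s}_{x,y}\circ C(x)^{-1}$, which is \eqref{intertwines}. The identical argument with \eqref{as limits 2} and (H3$'$) in place of \eqref{as limits} and (H3) handles the unstable holonomy.

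I expect the main obstacle to be the bookkeeping in Step 2: one must pair the decay of $\|E_n\|\le c'(\nu^n_x)^\beta\dist(x,y)^\beta$ against the \emph{growth} of $\|\B^n_x\|\cdot\|(\B^n_y)^{-1}\|$ and verify that the fiber bunching hypothesis on $\B$ gives genuine exponential smallness rather than mere boundedness. The cleanest route is to carry out the whole estimate for a fixed high iterate $f^N$ chosen so that the fiber-bunching constants beat $\theta^N$ with room to spare, or equivalently to phrase everything in terms of the $\nu^n_x,\hat\nu^n_x$ notation and the telescoping product estimates already used to prove Proposition \ref{holonomies exist}; the argument there is exactly the template, and the only new ingredient is the single extra factor $E_n$, whose size is controlled by the H\"older exponent $\beta$ of $C$ — which is why the hypothesis asks for a $\beta$-H\"older, not merely continuous, conjugacy.
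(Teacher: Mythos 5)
Your proposal is correct and follows essentially the same route as the paper: the identity you write is exactly the paper's decomposition \eqref{meascoh1} with $\Delta_n=C(f^ny)^{-1}\circ C(f^nx)-\Id$, the error estimate $\|\Delta_n\|\le K\,\nu_n(x)^\beta\dist(x,y)^\beta$ comes from the $\beta$-H\"older continuity of $C$ just as in the paper, and the ``gap'' you identify is supplied exactly as the paper does it — the strict fiber bunching inequality packaged as $\theta<1$ in \eqref{theta def}, combined with the comparison of $\|(\B^n_y)^{-1}\|$ with $\|(\B^n_x)^{-1}\|$ along the stable leaf, which is Lemma \ref{F_iS}. Passing to the limit of the standard holonomy approximants then gives \eqref{intertwines}, as in the paper.
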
 
 
 \noindent
It is clear from the proof that it suffices to assume $\beta$-H\"older 
continuity of $C$ along the stable/unstable leaves to obtain intertwining 
of the standard stable/unstable holonomies respectively. Conversely, 
intertwining $\beta$-H\"older holonomies implies $\beta$-H\"older 
continuity of $C$ along  the stable and unstable leaves. Then global 
H\"older continuity of $C$ follows for hyperbolic $f$. For a partially 
hyperbolic $f$, accessibility is not known to imply global 
H\"older continuity of $C$, but a stronger assumption suffices. 
The diffeomorphism $f$ is called {\em locally 
$\alpha$-H\"older accessible}\, if there exists a number $L=L(f)$ such 
that for all sufficiently close $x,y\in \M$ there is an $su$-path
$$P=\{x=x_0, \,x_1, \dots ,\, x_L=y\}\,\quad \text{such that}\quad 
  \dist_{W^i}  (x_{i-1}, x_i)\le C\,\dist (x,y)^\alpha
  $$
for $i=1, \dots, L.$
Here the distance between $x_{i-1}$ and $x_i$ is measured  along the 
corresponding stable or unstable leaf $W^i$. Such accessibility implies
$\alpha \beta$-H\"older continuity of $C$, see \cite[Corollary 3.7]{KS13}.
The usual accessibility implies that an $su$-path can be chosen with $L$ and 
the distances $\dist_{W^i}  (x_{i-1}, x_i)$  uniformly bounded.
If, in addition, the points $x_i$ can be chosen to depend H\"older 
continuously on $x$ and $y$, then Theorem \ref{alpha Holder} can
be used to obtain global H\"older continuity of $C$.

\vskip.1cm
 
Now we consider the problem of  finding 
sufficient conditions for existence of a continuous conjugacy
between two cocycles.
Suppose that $H^{\A,s}$ and $H^{\A,u}$ are stable and unstable 
holonomies for a cocycle $\A$.
Let $P=\{x_0, x_1, \dots , x_{k-1}, x_k\}$ be an  $su$-path in $\M$.
We define the {\em weight} of  $P$  as
$$
\H^{\A, P}_{x_0,x_k} =  H_{x_0, x_{k-1}} \circ \dots \circ H_{x_1, x_2} \circ H_{x_0, x_1}, 
$$
where $H_{x_i, x_{i+1}}=H^{s/u}_{x_i, x_{i+1}}$ if $x_{i+1} \in W^{s/u} (x_i)$.
An   {\em $su$-cycle} is an $su$-path in $\M$ with $x_0=x_k$,  and we refer to the corresponding $\H^{\A,P}_{x_0}$ as the {\em cycle weight}.
In case of real-valued cocycles, $\H^{\A,P}_{x_0}$ is also referred to
as the cycle functional.

\vskip.1cm

The following properties are easy to verify. 

\begin{proposition} \label{prop of inter conj}
Let $H^{\A}$ and $H^{\B}$ be holonomies for cocycles $\A$ and $\B$
and let $C$ be a continuous conjugacy between  $\A$ and $\B$
which intertwines these holonomies. Then

\begin{itemize}
\item[(i)]  $C$ conjugates the cycle weights of these holonomies, i.e.
$$
 \H^{\A,P}_x=C(x)\circ \H^{\B,P}_x \circ C(x)^{-1}
\quad\text{ for every }su\text{-cycle }P=P_x.
$$

\item[(ii)] More generally,  for any $x,y\in \M$
and any $su$-path $P_{x,y}$ from $x$ to $y$,
$$
\hskip1cm \H^{\A,P}_{x,y}=C(y)\circ \H^{\B,P}_{x,y}\circ C(x)^{-1}
\, \text{ and hence }\; 
C(y)=\H^{\A,P}_{x,y} \circ C(x)\circ  (\H^{\B,P}_{x,y})^{-1}.
$$

\item[(iii)]  $C$ is uniquely determined by its value at any point.
 \end{itemize}
\end{proposition}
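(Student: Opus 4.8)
The plan is to establish (ii) first, by induction on the length $k$ of the $su$-path $P_{x,y}=\{x=x_0,x_1,\dots,x_k=y\}$, then read off (i) as the special case of an $su$-cycle ($x_0=x_k$), and finally deduce (iii) from (ii) using accessibility of $f$. The whole argument uses only the intertwining identity \eqref{intertwines} (together with its unstable analog), the holonomy axioms (H1)--(H2), and the fact that each $H^{\B,s/u}_{x_i,x_{i+1}}$ lies in $GL(V)$ and is therefore invertible; the cocycle equation \eqref{conj eq} for $C$ plays no role.

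For $k=1$ the path $P$ lies in a single leaf of $W^s$ or $W^u$, so $\H^{\A,P}_{x,y}=H^{\A,s/u}_{x,y}$ and $\H^{\B,P}_{x,y}=H^{\B,s/u}_{x,y}$, and the asserted identity is precisely \eqref{intertwines} (respectively its unstable version). For the inductive step, split off the first segment: with $P'=\{x_1,\dots,x_k\}$ one has, directly from the definition of the weight as the ordered composition of the holonomies along the consecutive segments of $P$,
\[
\H^{\A,P}_{x_0,x_k}=\H^{\A,P'}_{x_1,x_k}\circ H^{\A,s/u}_{x_0,x_1}.
\]
Applying the inductive hypothesis to $P'$ and the case $k=1$ to the segment $\{x_0,x_1\}$, and cancelling the middle factor $C(x_1)^{-1}\circ C(x_1)$, gives
\[
\begin{aligned}
\H^{\A,P}_{x_0,x_k}
&=\big(C(x_k)\circ\H^{\B,P'}_{x_1,x_k}\circ C(x_1)^{-1}\big)\circ\big(C(x_1)\circ H^{\B,s/u}_{x_0,x_1}\circ C(x_0)^{-1}\big)\\
&=C(x_k)\circ\H^{\B,P}_{x_0,x_k}\circ C(x_0)^{-1}.
\end{aligned}
\]
This is the first identity in (ii); since $\H^{\B,P}_{x,y}\in GL(V)$ as a composition of elements of $GL(V)$, solving for $C(y)=C(x_k)$ yields the second identity.

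Part (i) is then immediate: an $su$-cycle is an $su$-path with $x_0=x_k=x$, so (ii) specializes to $\H^{\A,P}_x=C(x)\circ\H^{\B,P}_x\circ C(x)^{-1}$. For (iii), fix any $x_0\in\M$ and suppose $C(x_0)$ is prescribed. By accessibility of $f$ (a standing assumption), every $y\in\M$ is joined to $x_0$ by some $su$-path $P$, and the second identity in (ii) expresses $C(y)$ in terms of $C(x_0)$ and the fixed holonomies of $\A$ and $\B$ along $P$; hence $C$ is determined by its value at $x_0$. Equivalently, two continuous conjugacies that intertwine the same holonomies and agree at one point agree everywhere.

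There is no genuine obstacle here: the argument is a bookkeeping induction on the concatenation structure of $su$-paths. The only points requiring a little care are matching the composition order in the definition of $\H^{\A,P}$ with the order in which segments are peeled off, and invoking full accessibility to reach every point of $\M$ by an $su$-path in part (iii); both are handled above.
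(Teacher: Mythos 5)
Your argument is correct: the induction on the concatenation structure of the $su$-path, using only the intertwining identity \eqref{intertwines} and its unstable analog, gives (ii), with (i) as the cycle case and (iii) following from accessibility (a standing assumption) via the second identity in (ii). The paper offers no proof of this proposition, stating only that the properties are ``easy to verify,'' and your bookkeeping verification is exactly the intended routine argument.
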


The next theorem gives a sufficient condition for existence of a continuous 
conjugacy intertwining holonomies. By the previous proposition, this condition 
is also necessary.

\begin{theorem} \label{sufficient}
Let $\A$ and $\B$ be cocycles with holonomies  $H^{\A}$ and $H^{\B}$. Suppose that there exist $x_0\in \M$ and $C_{x_0}\in  GL(V)$  such that 
\vskip.15cm
\begin{itemize}
\item[(i)]
$\H^{\A,P}_{x_0}=C_{x_0}\circ \H^{\B,P}_{x_0} \circ C_{x_0}^{-1} \quad 
\text{for every }su\text{-cycle }P_{x_0} $, and
\vskip.1cm
\item[(ii)]  $\A_{x_0}= C_{f{x_0}}\circ \B_{x_0} \circ C_{x_0}^{-1}$,
\,where 
$C_{f{x_0}}=\H^{\A,P}_{{x_0},f{x_0}} \circ C_{x_0}\circ  (\H^{\B,P}_{{x_0},f{x_0}})^{-1}$
for some $su$-path $P_{{x_0},f{x_0}}$ from $x_0$ to $fx_0$.
\end{itemize}
Then there exists a continuous conjugacy $C$ between $\A$ and $\B$ 
with $C({x_0})=C_{x_0}$ that intertwines $H^{\A}$ and $H^{\B}$.
\end{theorem}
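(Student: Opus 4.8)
The plan is to define the conjugacy $C$ on all of $\M$ by transporting $C_{x_0}$ along $su$-paths, using the formula from Proposition \ref{prop of inter conj}(ii) as a definition rather than a conclusion. Concretely, for $x\in\M$ pick an $su$-path $P$ from $x_0$ to $x$ (this exists by accessibility) and set
\begin{equation}\label{def C}
C(x) := \H^{\A,P}_{x_0,x}\circ C_{x_0}\circ (\H^{\B,P}_{x_0,x})^{-1}.
\end{equation}
The first and main task is to show this is \emph{well-defined}, i.e. independent of the choice of $P$. Given two $su$-paths $P$, $P'$ from $x_0$ to $x$, their concatenation $P'^{-1}*P$ (traversing $P$ then $P'$ backwards) is an $su$-cycle $Q$ at $x_0$, and using property (H2) for the holonomies one checks that $\H^{\A,P'}_{x_0,x}$ and $\H^{\A,P}_{x_0,x}$ differ exactly by the cycle weight $\H^{\A,Q}_{x_0}$: namely $(\H^{\A,P'}_{x_0,x})^{-1}\circ\H^{\A,P}_{x_0,x}=\H^{\A,Q}_{x_0}$, and similarly for $\B$. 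Hypothesis (i) says $\H^{\A,Q}_{x_0}=C_{x_0}\circ\H^{\B,Q}_{x_0}\circ C_{x_0}^{-1}$, and substituting this into the two expressions \eqref{def C} coming from $P$ and $P'$ shows they agree. This is the step I expect to be the crux; the rest is bookkeeping, but one must be careful that the algebra of composing holonomies along concatenated paths (reversing a path inverts the corresponding weight) is set up consistently with Definitions \ref{holonomies} and the $su$-path weight conventions.

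Once $C$ is well-defined, I would verify its three required properties in turn. \emph{Continuity}: holonomies $H^{\A,s}$, $H^{\A,u}$ are continuous in $(x,y)$ by definition, so for $x$ near a fixed point $p$ one can append to a fixed path $P$ from $x_0$ to $p$ a short $su$-path from $p$ to $x$ (using accessibility with uniformly bounded length, as in the Standing assumptions discussion); then $C(x)$ depends continuously on $x$ because each holonomy factor does and $GL(V)$-operations are continuous. \emph{Intertwining}: for $y\in W^s(x)$, choose the path to $y$ to be (a path $P$ to $x$) followed by the single stable step from $x$ to $y$; then $\H^{\A,P*}_{x_0,y}=H^{\A,s}_{x,y}\circ\H^{\A,P}_{x_0,x}$, and plugging into \eqref{def C} (for both $y$ and $x$, using well-definedness to use this particular path at $x$) gives exactly $H^{\A,s}_{x,y}=C(y)\circ H^{\B,s}_{x,y}\circ C(x)^{-1}$; the unstable case is identical.

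\emph{Conjugacy equation}: I must check $\A_x=C(fx)\circ\B_x\circ C(x)^{-1}$ for all $x$. By hypothesis (ii) this holds at $x=x_0$ with $C(fx_0)$ defined via the path $P_{x_0,fx_0}$, which is consistent with \eqref{def C} by well-definedness. To propagate it everywhere, note that both sides of the conjugacy equation, as functions of $x$, are continuous; the set where it holds is therefore closed. Using property (H3) — the equivariance $H^{\A,s}_{x,y}=(\A^n_y)^{-1}\circ H^{\A,s}_{f^nx,f^ny}\circ\A^n_x$ — together with \eqref{def C}, one shows that if the conjugacy equation holds at $x$, then it holds at every point reachable from $x$ by a single stable or unstable step (the holonomy factors absorb the shift correctly), hence along any $su$-path, hence on an accessibility class. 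Since $f$ is accessible this is all of $\M$. Here is the one subtlety to watch: the equivariance (H3) relates holonomies at $x$ and at $f^n x$, so to connect the conjugacy equation at $x$ with that at a stable-neighbor $y$ one uses that $fy\in W^s(fx)$ and that \eqref{def C} may be evaluated along a path from $x_0$ to $fx$ followed by the stable step to $fy$; matching this against the path from $x_0$ to $fy$ through $x$, $y$ requires exactly the cocycle relation $\A_x=C(fx)\circ\B_x\circ C(x)^{-1}$ to have been verified one step earlier — so the induction along the $su$-path closes. Finally $C(x_0)=C_{x_0}$ is immediate from \eqref{def C} with the trivial path, and (iii) of the preceding proposition gives uniqueness. \pf
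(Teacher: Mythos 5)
Your proposal is correct and takes essentially the same route as the paper: define $C$ by transporting $C_{x_0}$ along $su$-paths via the holonomy weights, deduce well-definedness from hypothesis (i) through the cycle obtained by concatenating two paths, get continuity and the intertwining relation from continuity of the holonomies together with accessibility, and obtain the cohomological equation from property (H3) combined with hypothesis (ii). The only cosmetic difference is that you propagate the conjugacy equation leg-by-leg along an $su$-path by induction, while the paper applies (H3) to the entire transported path $fP_{x_0,y}$ in a single computation; the ingredients and the logic are the same.
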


We note that due to the first assumption, $C_{f{x_0}}$ in (ii) does not depend
on the choice of a path $P_{x_0,fx_0}$. If $x_0$ is a fixed point for $f$ then, considering the trivial path from 
$x_0$ to $fx_0=x_0$, we see that   condition (ii)  becomes 
$\A_{x_0}=C_{x_0} \circ \B_{x_0} \circ C_{x_0}^{-1}$, and we obtain the following corollary.
Thus, in this case (i) can be viewed 
as a sufficient condition for extending a conjugacy from a given value at a fixed point.

\begin{corollary} \label{fixedpoint}
Let $\A$ and $\B$ be cocycles with holonomies  $H^{\A}$ and $H^{\B}$. 
Suppose that  there exist a fixed point $x_0$ and $C_{x_0}\in  GL(V)$ such that 
 $\A_{x_0}= C_{x_0}\circ \B_{x_0} \circ C_{x_0}^{-1}$ and 
 $\H^{\A,P}_{x_0}=C_{x_0}\circ \H^{\B,P}_{x_0} \circ C_{x_0}^{-1}$
for every $su$-cycle $P_{x_0}$. Then there exists a continuous conjugacy 
$C$ between $\A$ and $\B$ with $C({x_0})=C_{x_0}$ that intertwines 
$H^{\A}$ and $H^{\B}$.
\end{corollary}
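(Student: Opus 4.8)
The plan is to obtain Corollary \ref{fixedpoint} as an immediate consequence of Theorem \ref{sufficient}, so the only work is to check that the hypotheses of the corollary supply conditions (i) and (ii) of the theorem for a suitable choice of $su$-path. First I would take $x_0$ to be the given fixed point and note that, since $fx_0=x_0$, the one-point path $P_{x_0,fx_0}=\{x_0\}$ is a legitimate $su$-path from $x_0$ to $fx_0$: a single point lies trivially in one leaf of $W^s$. This is the path I would use in condition (ii).

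Next I would compute the weight of this trivial path. By property (H2) we have $H^{\A,s}_{x_0,x_0}=\Id$ (and likewise for the unstable holonomy and for $\B$), so $\H^{\A,P}_{x_0,fx_0}=\Id$ and $\H^{\B,P}_{x_0,fx_0}=\Id$. Hence the element
$$
C_{fx_0}=\H^{\A,P}_{x_0,fx_0}\circ C_{x_0}\circ (\H^{\B,P}_{x_0,fx_0})^{-1}
$$
appearing in Theorem \ref{sufficient}(ii) equals $C_{x_0}$, and condition (ii) reduces to $\A_{x_0}=C_{x_0}\circ \B_{x_0}\circ C_{x_0}^{-1}$, which is precisely the first hypothesis of the corollary. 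Condition (i) of the theorem is, word for word, the second hypothesis of the corollary. I would also invoke the consistency remark following Theorem \ref{sufficient}: once (i) holds, the element $C_{fx_0}$ is independent of the choice of $P_{x_0,fx_0}$, which is exactly what makes passing to the trivial path harmless.

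Finally I would apply Theorem \ref{sufficient} to produce a continuous conjugacy $C$ between $\A$ and $\B$ with $C(x_0)=C_{x_0}$ that intertwines $H^{\A}$ and $H^{\B}$, which is the conclusion of the corollary. I do not expect a genuine obstacle: the statement is a specialization of Theorem \ref{sufficient}, and the entire content is the observation that at a fixed point the trivial path trivializes condition (ii). The only point deserving a word of care is the path-independence of $C_{fx_0}$ noted above.
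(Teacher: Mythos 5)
Your proposal is correct and follows exactly the paper's route: the corollary is deduced from Theorem \ref{sufficient} by taking the trivial path from $x_0$ to $fx_0=x_0$, so that $C_{fx_0}=C_{x_0}$ and condition (ii) reduces to $\A_{x_0}=C_{x_0}\circ\B_{x_0}\circ C_{x_0}^{-1}$, while (i) is the second hypothesis verbatim. Your remark on path-independence of $C_{fx_0}$ matches the observation made in the paper after Theorem \ref{sufficient}.
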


Now we apply Theorem \ref{sufficient} to the question when a cocycle $\A$ is 
cohomologous to a constant cocycle. Clearly, for a constant cocycle $\B$
 the standard holonomy is trivial, $H_{x,y}^{\B}=\Id$. Thus $\H^{\B,P}=\Id$
  for every $su$-cycle $P$ and hence  (i) becomes $\H^{\A,P}_{x_0}=\Id$.
Condition (ii) can be rewritten as $\B_{x_0}=C_{x_0}^{-1}  \circ (\H^{\A,P}_{{x_0},f{x_0}} )^{-1}\circ \A_{x_0} \circ C_{x_0}$ and so it defines a constant 
cocycle $\B$ uniquely for any choice of  $C_{x_0}$.  Thus we obtain
the first part of the following corollary. It was established in \cite{KN} 
for systems with local accessibility and for the standard holonomy of a 
cocycle satisfying a certain bunching assumption.
%with values in a Lie group or a group of diffeomorphisms of a compact manifold.

\begin{corollary}  \label{cohomologous to constant}
If a  cocycle $\A$ has a holonomy $H^{\A}$ satisfying 
\begin{equation} \label{trivial H}
\H^{\A,P}_{x_0}=\Id \quad  \text{ for 
very $su$-cycle $P_{x_0}$ based at some  point ${x_0}\in \M$},
\end{equation}   
then there exists a continuous conjugacy  between $\A$ and a constant
cocycle $\B$ that intertwines $H^{\A}$ and the standard holonomy 
$H^{\B}=\Id\,$ for $\B$.
Existence of such a holonomy $H^{\A}$ is a necessary condition
for $\A$  to be cohomologous to a constant cocycle.
\end{corollary}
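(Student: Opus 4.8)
The plan is to obtain the sufficiency statement as a direct application of Theorem \ref{sufficient}, following the discussion preceding the corollary, and to prove necessity by transporting the trivial standard holonomy of the constant cocycle through the conjugacy.

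For sufficiency, suppose $\A$ carries a holonomy $H^{\A}=\{H^{\A,s},H^{\A,u}\}$ with $\H^{\A,P}_{x_0}=\Id$ for every $su$-cycle $P_{x_0}$ based at $x_0$. I would fix an arbitrary $C_{x_0}\in GL(V)$, pick (using accessibility) any $su$-path $P_{x_0,fx_0}$ from $x_0$ to $fx_0$, and set
$$
B_0:=C_{x_0}^{-1}\circ(\H^{\A,P}_{x_0,fx_0})^{-1}\circ\A_{x_0}\circ C_{x_0}.
$$
The cycle hypothesis makes $B_0$ independent of the chosen path, exactly as in the remark following Theorem \ref{sufficient}. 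Let $\B$ be the constant cocycle generated by $B_0$; then $(\B^n_y)^{-1}\circ\B^n_x=B_0^{-n}B_0^n=\Id$ for all $x,y,n$, so $\B$ has the standard holonomy $H^{\B}\equiv\Id$ and hence $\H^{\B,P}\equiv\Id$ for every $su$-path $P$. In particular the prescription of Theorem \ref{sufficient}(ii) gives $C_{fx_0}=\H^{\A,P}_{x_0,fx_0}\circ C_{x_0}$, so condition (i) of that theorem reduces to $\H^{\A,P}_{x_0}=\Id$ and condition (ii) reduces to the defining relation for $B_0$; both therefore hold. Theorem \ref{sufficient} then yields a continuous conjugacy $C$ between $\A$ and $\B$ with $C(x_0)=C_{x_0}$ that intertwines $H^{\A}$ and $H^{\B}\equiv\Id$.

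For necessity, suppose $C$ is a continuous conjugacy between $\A$ and a constant cocycle $\B$ generated by $B_0$, so $\A^n_x=C(f^nx)\circ B_0^n\circ C(x)^{-1}$ for all $n$ and $x$; as above $\B$ has trivial standard stable and unstable holonomies. I would define $H^{\A,s}_{x,y}:=C(y)\circ C(x)^{-1}$ for $y\in W^s(x)$ and $H^{\A,u}_{x,y}:=C(y)\circ C(x)^{-1}$ for $y\in W^u(x)$, and verify the axioms. Property (H1) is continuity of $C$; (H2) follows from $C(x)C(x)^{-1}=\Id$ and $\bigl(C(z)C(y)^{-1}\bigr)\bigl(C(y)C(x)^{-1}\bigr)=C(z)C(x)^{-1}$; and for (H3) (resp. (H3$'$)) I would substitute $\A^n_x=C(f^nx)\circ B_0^n\circ C(x)^{-1}$ and $(\A^n_y)^{-1}=C(y)\circ B_0^{-n}\circ C(f^ny)^{-1}$ (resp. the analogous identities with $n$ replaced by $-n$) into the right-hand side, after which the factors $C(f^{\pm n}\cdot)^{-1}C(f^{\pm n}\cdot)$ and $B_0^{\mp n}B_0^{\pm n}$ cancel and the expression collapses to $C(y)\circ C(x)^{-1}$. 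Thus $H^{\A}=\{H^{\A,s},H^{\A,u}\}$ is a holonomy for $\A$, and $C$ intertwines it with the trivial $H^{\B}$ by construction. Finally, for any $su$-cycle $P=\{x_0,x_1,\dots,x_k=x_0\}$ the weight $\H^{\A,P}_{x_0}$ is a composition of factors $C(x_{i+1})\circ C(x_i)^{-1}$ which telescopes to $C(x_k)\circ C(x_0)^{-1}=\Id$; so $H^{\A}$ satisfies \eqref{trivial H}, indeed at every base point.

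I do not anticipate a genuine obstacle, as the corollary is essentially a specialization of Theorem \ref{sufficient}. The only two points needing a little care are the path-independence of $B_0$ in the sufficiency part, which is the content of the remark after Theorem \ref{sufficient}, and the verification of (H3)/(H3$'$) in the necessity part, where the conjugacy relations for $\A^n_x$ and $(\A^n_y)^{-1}$ must be inserted in the correct order for all the intermediate and constant factors to cancel.
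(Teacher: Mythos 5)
Your proposal is correct and follows essentially the same route as the paper: the sufficiency part is exactly the paper's specialization of Theorem \ref{sufficient} (trivial standard holonomy of the constant cocycle, with $\B$ determined from condition (ii) and any choice of $C_{x_0}$), and the necessity part is the paper's observation that the conjugacy transports the trivial holonomy of $\B$ to the holonomy $H^{\A}_{x,y}=C(y)\circ C(x)^{-1}$ for $\A$, whose cycle weights vanish (the paper phrases the last step via Proposition \ref{prop of inter conj}, you verify the telescoping directly — the same computation).
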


The second part of the corollary follows from 
Proposition \ref{prop of inter conj} 
and the following observation: for any holonomy $\H^{\B}_{x,y}$ and any continuous conjugacy $C$ between $\A$ and $\B$, the formula
 $C(y) \circ  \H^{\B}_{x,y} \circ C(x)^{-1}$ defines a holonomy for $\A$. 
We note, however, that having the standard holonomy satisfy \eqref{trivial H}
is not a necessary condition for existence of a continuous 
conjugacy to a constant cocycle. Indeed, the cocycle $\A$ in 
Proposition \ref{nonreg example} is cohomologous 
to the constant cocycle $\B$ via a continuous $C$, but no conjugacy intertwines their standard holonomies. This together with Corollary \ref{cohomologous to constant}
implies that \eqref{trivial H} does not hold for the standard holonomy of $\A$. Also, the standard holonomy of $\A$ is mapped by $C$ 
to a non-standard holonomy for $\B$ for which \eqref{trivial H} does not hold.
In particular, holonomes for $\A$ and $\B$ are non-unique.

%%%%%%%%%%%%%%  PROOFS  %%%%%%%%%%%%
%%%%%%%%%%%%%%%%%%%%%%%%%%%%%%%%

\section{Proofs}

\subsection{Proof of Theorem \ref{alpha Holder}.}

Since the cocycle $\A$  is fiber bunched and since by \eqref{partial def} 
$0<\nu(x)<\gamma(x)$, we can fix $\theta <1$ sufficiently close to 1 so that for all $x\in \M$, 
\begin{equation}\label{theta def}
\| \A_x\|\cdot \| \A_x^{-1}\| \cdot  \nu(x)^\beta \le \theta,
\quad  \| \A_x\|\cdot \| \A_x^{-1}\| \cdot  \hat \nu(x)^\beta \le \theta,
\;\text{ and }\;(\nu(x)/\gamma(x))^\beta< \theta.
\end{equation}
Since $\mh <1$ from \eqref{kappa} we can choose 
$\alpha$, $0<\alpha \le \beta$, sufficiently close to 0 so that 
\begin{equation}\label{alpha def} 
 \theta < (\mh (x) \nu (x) )^\alpha \quad \text{for all } x\in \M.
\end{equation}

By iterating points $x,y,x'',y''$ forward and using invariance of the holonomies (H3), we can assume without loss of generality that $\,y\in W^s_{\delta_0}(x)$, $\,y''\in W^s_{\delta_0}(x'')$ for some sufficiently small  $\delta_0>0$.
We denote $E^{cu}=E^{c}\oplus E^{u}$ and let $\Sigma_x$ be the exponential 
of the ball of radius $C_1\delta$ centered at $x$ in $E^{cu}(x)$. 
Since $E^{cu}$ is transversal to $E^s$, 
we can fix $C_1>0$ such that if $\delta$ is sufficiently small then for
 any $x\in \M$, $\Sigma_x$ is a submanifold transversal to $W^s$ and for any 
$x''$ with $\dist(x,x'')<\delta$ there is a unique intersection point $x'= \Sigma_x \cap W^s(x'')$. If $\delta$ is sufficiently small then the distances $\dist(x,x')$ 
and $\dist(x',x'')$ are at most $C_2\dist(x,x'')$, for some constant $C_2>0$ independent of
points $x,y,x'',y''$,
and also for each $z \in \Sigma_x$ the tangent space $T_z \Sigma_x$ is close 
to $E^{cu}(z)$. Similarly, we define $\Sigma_y$ and $y'$. 
By taking $\delta<\delta_0$ sufficiently small we can also ensure that $x',y,y' \in B_{2\delta_0}(x)$ and $\,y'\in W^s_{2\delta_0}(x')$.

First we iterate the points $x, x',y,y'$ and  estimate the distances between 
their trajectories in the next lemma. The setting and arguments here are similar to ones
in a direct proof of H\"older continuity of stable holonomies for a 
partially hyperbolic system, cf. \cite[Proposition 5.2]{W}.
 %From now on we will work with $x'$ and $y'$ to prove \eqref{H'}. 
We denote $x_k=f^kx$, and 
\begin{equation}\label{def nu_n}
\nu_k(x)=\nu(f^{k-1}x) \cdots \nu(fx)\, \nu(x)=\nu(x_{k-1})\cdots \nu(x_1) \, \nu(x_0).
\end{equation}
We will use similar notations for $x'$, $y$, and $y'$ as well as for the functions 
$\nh$, $\mh$, and $\g$.
We choose $n$ so that $\dist(x,x') \approx  \nu _n(x) \,\mh _n (x)$. 
More precisely, we take $n$ to be the largest integer satisfying the first
inequality in
\begin{equation}\label{n def}
 \,\dist(x,x')   \le \nu _n(x) \,\mh _n (x) \le C' \dist(x,x')  
\end{equation}  
This implies the second inequality with some constant  $C'$ independent of $x,x'$. 

\begin{lemma} \label{points}
Let $n$ be chosen according to \eqref{n def}. Then there exists $M$ such that
\begin{itemize}

\item[(a)]  $\dist_{W^s} (x_n, y_n)\le M \nu_n(x)\,$ and 
$\;\dist_{W^s} (x_n', y_n')\le M \nu_n(x);$ 
 \vskip.1cm
 
\item[(b)] $\dist(x_k,x_k') \le  \nu _n(x)  \,\mh _{n-k}(x_k)\,$ and 
 $\;\dist (y_k, y_k') \le 
  M \nu_n(x) \g_{n-k}(x_k)^{-1}$  \\for 
$\,0\le k\le n.$
\end{itemize}

\end{lemma}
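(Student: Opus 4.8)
The plan is to track the forward orbits of the four points $x,x',y,y'$ for $n$ steps, exploiting that the displacements $x\mapsto x'$ and $y\mapsto y'$ lie in the center--unstable direction while $x\mapsto y$ and $x'\mapsto y'$ lie along stable leaves. The key to all the bookkeeping is that the choice of $n$ in \eqref{n def} forces $x_k,x_k',y_k,y_k'$ and the short $\Sigma_x$-- and $\Sigma_y$--arcs joining them to stay within a distance $O(\nu_n(x))$ of $x_k$ for every $0\le k\le n$; since $\nu_n(x)$ is small when $\delta$ is small, the rate functions $\nu,\g,\hat\gamma,\mh$ along any of these orbits may then be compared with their values along the orbit of $x$ up to a factor $1+o(1)$, and I would freely do so below. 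I would prove the estimates in the order: first inequality of (b), then (a), then second inequality of (b).

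For the first inequality of (b): a narrow cone field around $E^{cu}$ is $Df$--forward invariant, because $E^s\oplus E^{cu}$ is dominated ($\|Df|_{E^s}\|<\nu<\g$, and $\g$ also lower bounds the expansion on $E^{cu}$ by \eqref{partial def}), and every $\bv$ in this cone at $z$ satisfies $\|Df_z\bv\|\le\mh(z)^{-1}\|\bv\|$, since $\|Df_z|_{E^c}\|<\hat\gamma(z)^{-1}<\mh(z)^{-1}$ and $\|Df_z|_{E^u}\|<\mh(z)^{-1}$ by \eqref{partial def} and \eqref{kappa}, the remaining slack in \eqref{kappa} absorbing the cone width (choosing the metric so that $E^s,E^c,E^u$ are orthogonal makes this transparent and, crucially, introduces no per--step constant). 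As $x'\in\Sigma_x$ and $\Sigma_x$ is nearly tangent to $E^{cu}$, the $\Sigma_x$--arc from $x$ to $x'$ has length $(1+o(1))\dist(x,x')$ and tangent vectors in the cone; applying $f^k$ and integrating gives $\dist(x_k,x_k')\le(1+o(1))\,\mh_k(x)^{-1}\dist(x,x')$. Combined with $\dist(x,x')\le\nu_n(x)\mh_n(x)$ from \eqref{n def} and the identity $\mh_n(x)=\mh_k(x)\mh_{n-k}(x_k)$, this is the claimed bound for $0\le k\le n$; in particular $\dist(x_k,x_k')\le\nu_n(x)$, which is what justifies the uniformity assertion above.

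For (a): along a stable leaf the mean value estimate over the short segment joining $x_k$ to $y_k$, together with Lipschitz continuity of $\nu$, gives $\dist_{W^s}(x_{k+1},y_{k+1})\le\nu(x_k)\bigl(1+L\dist_{W^s}(x_k,y_k)\bigr)\dist_{W^s}(x_k,y_k)$; a standard bootstrap (using $\nu<1$ uniformly, so these distances are summable) yields $\dist_{W^s}(x_k,y_k)\le C\delta_0\nu_k(x)$ for all $k$, and in particular $\dist_{W^s}(x_n,y_n)\le M\nu_n(x)$. The same applied to $x',y'$ (note $y'\in W^s_{2\delta_0}(x')$) gives $\dist_{W^s}(x_n',y_n')\le C\delta_0\nu_n(x')$, and $\nu_n(x')\le C\nu_n(x)$ because $\sum_{k<n}\dist(x_k,x_k')\le\nu_n(x)\sum_{j\ge1}\mh_{\max}^{\,j}=O(1)$ by the first part of (b). For the second inequality of (b), the extra ingredient is the geometric comparison $\dist(y,y')\le C\dist(x,x')$: the leaves $W^s(x)=W^s(y)$ and $W^s(x'')=W^s(y'')$ are ``$cu$--parallel'', their transverse center--unstable gap varies by a bounded factor over the bounded region containing $x,x',y,y'$, and $\dist(x,x')$, $\dist(y,y')$ are precisely this gap measured in $\Sigma_x$ at $x$ and in $\Sigma_y$ at $y$ (equivalently, the stable holonomy $\Sigma_x\to\Sigma_y$ sends $x\mapsto y$, $x'\mapsto y'$ and is Lipschitz on bounded transversals). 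Repeating the $E^{cu}$--cone argument for $y,y'$ then gives $\dist(y_k,y_k')\le C\,\mh_k(x)^{-1}\dist(y,y')\le C\nu_n(x)\mh_{n-k}(x_k)$, and since $\g\hat\gamma<1$ and $\mh<\hat\gamma$ give $\mh\g<1$, we get $\mh_{n-k}(x_k)\le\g_{n-k}(x_k)^{-1}$ and hence $\dist(y_k,y_k')\le M\nu_n(x)\g_{n-k}(x_k)^{-1}$.

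The main obstacle is the uniformity bookkeeping rather than any individual estimate. Since $f^k\Sigma_x$ can have diameter comparable to $\mh_k(x)^{-1}\dist(x,x')$, which for $k$ near $n$ is only $\approx\nu_n(x)$ and not $\approx\dist(x,x')$, one must use the precise defining property of $n$ in \eqref{n def} to check that all the orbits and the connecting arcs remain in a small neighbourhood of the orbit of $x$ throughout $0\le k\le n$, so that replacing rate functions along them by those along the orbit of $x$ costs only a factor $1+o(1)$ and never compounds into an exponential error; and one must ensure that the center--unstable cone expansion is bounded by $\mh(\cdot)^{-1}$ with \emph{no} per--step constant, which is why the cone is taken narrow and, if one wishes, the metric adapted.
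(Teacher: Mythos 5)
Your treatment of (a) and of the first inequality in (b) is essentially the paper's argument (iterate forward, bound the expansion of the arc from $x$ to $x'$ by $\hat\mu^{-1}$, use \eqref{n def} and the cocycle identity $\hat\mu_n(x)=\hat\mu_k(x)\,\hat\mu_{n-k}(x_k)$, and contract along the stable leaves), though your bookkeeping is heavier than necessary: the paper does not compare rate functions along different orbits multiplicatively at all. Since the inequalities in \eqref{partial def} and \eqref{kappa} are strict and the functions are continuous on a compact manifold, the value of $\nu$, $\hat\mu$, $\gamma$ at $x_k$ is itself a valid bound at every point of $B_r(x_k)$ for a uniform $r>0$; this removes the $1+o(1)$ factors, the need for Lipschitz continuity of $\nu$ (which is not assumed -- the functions are only continuous, though one could choose them smooth), and the detour through comparing $\nu_n(x')$ with $\nu_n(x)$. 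Note also that your blanket claim that all four orbits stay within $O(\nu_n(x))$ of $x_k$ is false for $y_k$ (that distance is only $O(\delta_0\nu_k(x))$, of size $\delta_0$ at $k=0$); what is actually needed, and what the paper uses, is just that they stay in the uniform ball $B_r(x_k)$.

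The genuine gap is in your proof of the second inequality of (b). You base it on the claim $\dist(y,y')\le C\,\dist(x,x')$, i.e.\ uniform Lipschitz continuity of the $W^s$-holonomy between the transversals $\Sigma_x$ and $\Sigma_y$ (over a stable distance of order $\delta_0$, which does not shrink with $\dist(x,x')$). This is not an elementary "bounded gap" observation: regularity of stable holonomies between transversals is exactly the delicate issue this lemma (like Proposition 5.2 of \cite{W}) is designed to circumvent; in general such holonomies are only H\"older, and any Lipschitz statement would require a bunching-type argument that you do not supply. Indeed, the lemma's own conclusion at $k=0$ gives only $\dist(y,y')\le M\nu_n(x)\g_n(x)^{-1}\approx \dist(x,x')\,(\hat\mu_n(x)\g_n(x))^{-1}$, a H\"older-type bound strictly weaker than what you are assuming, so your input is stronger than (part of) the statement being proved. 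The paper avoids this entirely: having established (a) and the first half of (b), it bounds $\dist(y_n,y_n')\le \dist(x_n,x_n')+\dist(x_n,y_n)+\dist(x_n',y_n')\le M\nu_n(x)$ by the triangle inequality at time $n$, and then iterates \emph{backward} from $n$ to $k$, using that $\g$ is a lower bound for the expansion along directions close to $E^{cu}$ (the forward iterates of the $\Sigma_y$-arc stay close to $E^{cu}$), so that $\dist(y_k,y_k')\le \dist(y_n,y_n')\,\g_{n-k}(x_k)^{-1}$. You should replace your holonomy claim by this forward-then-backward argument; the rest of your scheme then goes through.
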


\begin{proof} 
By continuity of the functions $\nu$, $\mh$, and $\g$ from \eqref{partial def} 
and \eqref{kappa}, there exists $0<r<1$ such for any point $p\in \M$ the
value at $p$ gives the corresponding estimate for any $q\in B_{r}(x)$.
It will be clear from the estimates that by taking $\delta_0$ and $\delta$ 
small enough, which forces $n$ to be large enough, we can ensure that
$x'_k,y_k,y'_k \in B_{r}(x_k)$ for each $0\le k\le n$. The first part of (b) 
follows since $\mh^{-1}$ bounds above the maximal expansion of $f$:
$$
   \dist(x_k,x_k') \le  \dist(x, x') \,\mh _k(x)^{-1} \le 
   \nu _n(x) \,\hat \mu_n (x) \,\mh _k(x)^{-1}= \nu _n(x)  \,\mh _{n-k}(x_k).
$$
Since $\,y\in W^s_{\delta_0}(x)$ and $\,y'\in W^s_{2\delta_0}(x')$ we obtain
$$ 
\dist_{W^s} (x_n, y_n)\le \delta_0 \,\nu_n(x) \quad\text{and}\quad
\dist_{W^s} (x_n', y_n')\le 2  \delta_0 \,\nu_n(x).
$$
Choosing $M= 3\delta_0 +1$ we obtain part  (a) and the estimate
$$ 
 \dist (y_n, y_n') \le \dist (x_n, x_n') +\dist (x_n, y_n) +
 \dist (x_n', y_n') \le  M\, \nu_n(x).
 $$
Since $\g$ is less than the strongest contraction along $E^{cu}$, we obtain 
the second part of (b):  
 $$
  \dist (y_k, y_k') \le   \dist (y_n, y_n') \gamma_{n-k}(x_k)^{-1}\le
  M \, \nu_n(x) \gamma_{n-k}(x_k)^{-1}
 $$
  for $k=0, 1, \dots , n$.
For this we note that the transversals $\Sigma_x$ and 
 $\Sigma_y$ are chosen close to $E^{cu}$ and that their forward iterates
 $f^k(\Sigma_x)$ and $f^k(\Sigma_y)$ will remain close to $E^{cu}$.
\end{proof}
\vskip.2cm

Now we estimate the holonomies. For simplicity, in this proof we use $H$ 
for the standard stable holonomy $H^{\A,s}$. Our goal is to show that 
\begin{equation}   \label{H'}
\|H_{x',y'} \circ H_{x,y}^{-1} -\Id \| \le C_{14} \dist(x,x')^\alpha.
 \end{equation}
Note that all relevant holonomies between points $x, x',x'',y,y',y''$ lie in
a compact subset of $GL(V)$. Thus, once \eqref{H'} is established, 
Lemma \ref{eqdist} implies a H\"older estimate  for $d(H_{x,y}, H_{x',y'})$ 
similar to \eqref{H'}. Also, since $H_{x'',y'} ^{-1} \circ H_{x',y'} = H_{x',x''}$, 
 (H4) and the estimate $\dist(x',x'')\le C_2\dist(x,x'')$
give a $\beta$-H\"older estimate for $d(H_{x',y'}, H_{x'',y'})$.
Similarly, $\dist(y',y'')\le C_2\dist(y,y'')$ gives a $\beta$-H\"older estimate
for  $d(H_{x'',y'}, H_{x'',y''})$.
We conclude that \eqref{H'} yields the desired $\alpha$-H\"older estimate
for $d(H_{x,y}, H_{x'',y''})$ and proves the theorem. To prove \eqref{H'} we write
\begin{equation} \label{H H inv}
\begin{aligned}
H_{x',y'} \circ H_{x,y}^{-1} & \,= 
\left( (\A^n_{y'})^{-1} \circ H_{x_n',y_n'} \circ \A^n_{x'} \right) \circ
\left( (\A^n_{y})^{-1} \circ H_{x_n,y_n} \circ \A^n_{x} \right)^{-1}= \\
& \,=(\A^n_{y'})^{-1} \circ H_{x_n',y_n'} \circ 
\left( \A^n_{x'}  \circ (\A^n_{x})^{-1} \right)
\circ (H_{x_n,y_n})^{-1}   \circ \A^n_{y}= \\
&\, =(\A^n_{y'})^{-1} \circ ( \Id + \Delta_1) \circ 
( \Id + \Delta_2)\circ ( \Id + \Delta_3)   \circ \A^n_{y},
\end{aligned}
\end{equation} 
where
$$
\Delta_1 = H_{x_n',y_n'}-\Id, \qquad 
\Delta_2 = \A^n_{x'}  \circ (\A^n_{x})^{-1} - \Id, \qquad 
\Delta_3 = H_{y_n,x_n}-\Id.
$$
By (H4) and Lemma \ref{points}(a) we have
$$
\|\Delta_1\|=\|H_{x_n',y_n'}-\Id\|\le K \dist_{W^s} (x_n',y_n')^\beta\le
KM^\beta \nu_n(x)^\beta,
$$
and similarly $\|\Delta_3\|\le KM^\beta \nu_n(x)^\beta.\;$ 
Also, by Lemma \ref{Anxx'} below we have
$$
\|\Delta_2\|=\| \A^n_{x'}  \circ (\A^n_{x})^{-1} - \Id\| \le C_7 \nu_n(x)^\beta.
$$
Therefore, from \eqref{H H inv} we obtain
\begin{equation}\label{H estimate}
\|H_{x',y'} \circ H_{x,y}^{-1} -\Id \| \le 
\|(\A^n_{y'})^{-1}  \circ \A^n_{y} -\Id \| + 
\|(\A^n_{y'})^{-1} \| \cdot \|\A^n_{y}  \| \cdot C_{12}\nu _n(x)^\beta.
\end{equation}
Equation \eqref{H estimate} and Lemma \ref{Anyy'} imply that
$$
\|H_{x',y'} \circ H_{x,y}^{-1} -\Id \| \le C_{11} \theta ^n + C_9 \,\theta^n \nu_n (x)^{-\beta}  C_{12} \, \nu _n(x)^\beta \le C_{13} \theta ^n,
$$
and by the choices of $\alpha$  and $n$, \eqref{alpha def} and \eqref{n def}, 
we conclude that
$$
\|H_{x',y'} \circ H_{x,y}^{-1} -\Id \| \le C_{13}\theta ^n 
 \le  C_{13} \, (\mh _n (x) \nu_n (x))^{\alpha} \le C_{13} \, (C' \dist(x,x'))^\alpha.
$$
This completes the proof of the theorem  modulo Lemmas \ref{points}, 
\ref{Anxx'}, and \ref{Anyy'}.

\vskip.3cm

\begin{lemma} \label{Anxx'}
$\;\|\A^n_{x'}\circ (\A^n_x)^{-1} -\Id\,\|  \,\le\, C_7  \nu_n(x)^\beta.$
\end{lemma}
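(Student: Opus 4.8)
The plan is to telescope the operator $\A^n_{x'}\circ(\A^n_x)^{-1}-\Id$ along the common index of the two orbits. Writing $x_k=f^kx$, $x_k'=f^kx'$ and setting, for $0\le j\le n$,
$$\Psi_j\;=\;\A^{\,n-j}_{x_j'}\circ(\A^{\,n-j}_{x_j})^{-1},$$
we have $\Psi_n=\Id$ and $\Psi_0=\A^n_{x'}\circ(\A^n_x)^{-1}$, so $\A^n_{x'}\circ(\A^n_x)^{-1}-\Id=\sum_{j=0}^{n-1}(\Psi_j-\Psi_{j+1})$. Peeling off the step over $x_j$ (using $\A^{n-j}_{x_j'}=\A^{\,n-j-1}_{x_{j+1}'}\circ A(x_j')$ and $(\A^{n-j}_{x_j})^{-1}=A(x_j)^{-1}\circ(\A^{\,n-j-1}_{x_{j+1}})^{-1}$), each difference takes the form
$$\Psi_j-\Psi_{j+1}\;=\;\A^{\,n-j-1}_{x_{j+1}'}\circ\bigl(A(x_j')\circ A(x_j)^{-1}-\Id\bigr)\circ(\A^{\,n-j-1}_{x_{j+1}})^{-1},$$
and, since $A$ is $\beta$-H\"older and $\|A(\cdot)^{-1}\|\le M$ for some $M$, the middle factor has norm at most $cM\,\dist(x_j,x_j')^\beta$.

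The first step is to bound the amplification $\|\A^{\,n-j-1}_{x_{j+1}'}\|\cdot\|(\A^{\,n-j-1}_{x_{j+1}})^{-1}\|$. Estimating each factor by the product of the generator norms along the respective orbit and using $\|A(x_i')\|\le\|A(x_i)\|+c\,\dist(x_i,x_i')^\beta$ together with $\prod(1+t_i)\le\exp(\sum t_i)$, the discrepancy between the two orbits is absorbed into a constant, because $\sum_i\dist(x_i,x_i')^\beta$ is bounded independently of $n$: Lemma~\ref{points}(b) gives $\dist(x_i,x_i')\le\nu_n(x)\,\mh_{n-i}(x_i)\le\mh_0^{\,n-i}$ with $\mh_0=\max\mh<1$. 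The per-step fiber bunching inequality $\|A(x_i)\|\cdot\|A(x_i)^{-1}\|\le\theta\,\nu(x_i)^{-\beta}$ from \eqref{theta def}, applied to the $n-j-1$ remaining factors, then yields
$$\|\A^{\,n-j-1}_{x_{j+1}'}\|\cdot\|(\A^{\,n-j-1}_{x_{j+1}})^{-1}\|\;\le\;C'\,\theta^{\,n-j-1}\,\nu_{n-j-1}(x_{j+1})^{-\beta}.$$

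Next I would substitute the distance estimate $\dist(x_j,x_j')^\beta\le\nu_n(x)^\beta\,\mh_{n-j}(x_j)^\beta$ from Lemma~\ref{points}(b) and use the multiplicativity $\nu_n(x)=\nu_{j+1}(x)\,\nu_{n-j-1}(x_{j+1})$, which gives $\nu_n(x)^\beta\,\nu_{n-j-1}(x_{j+1})^{-\beta}=\nu_{j+1}(x)^\beta$ and, after the $\nu$'s and $\mh$'s are regrouped by index,
$$\|\Psi_j-\Psi_{j+1}\|\;\le\;C''\,\nu_n(x)^\beta\,\mh(x_j)^\beta\prod_{i=j+1}^{n-1}\Bigl(\theta\,\bigl(\mh(x_i)/\nu(x_i)\bigr)^\beta\Bigr).$$
The one point requiring care is that $\theta\,(\mh(x)/\nu(x))^\beta\le\kappa<1$ uniformly in $x$. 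This is arranged by choosing $\mh$ in Lemma~\ref{points} small enough that $\mh<\nu$ pointwise, which is legitimate because the only property of $\mh$ used in that lemma is that $\mh^{-1}$ dominates the maximal expansion of $f$, so $\mh$ may be replaced by any smaller positive continuous function (this merely forces a smaller $\alpha$ in \eqref{alpha def}). Granting this, $\prod_{i=j+1}^{n-1}(\cdots)\le\kappa^{\,n-j-1}$, whence $\|\Psi_j-\Psi_{j+1}\|\le C'''\,\kappa^{\,n-j-1}\,\nu_n(x)^\beta$, and summing the geometric series over $j=0,\dots,n-1$ gives $\|\A^n_{x'}\circ(\A^n_x)^{-1}-\Id\|\le C_7\,\nu_n(x)^\beta$.

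The hard part is the bookkeeping in the last two steps: one must check that the factor $\nu_n(x)^\beta$ furnished by the distance estimate is not swallowed by the amplification factor $\nu_{n-j-1}(x_{j+1})^{-\beta}$ coming from fiber bunching, and that what remains — which turns out to be exactly $\nu_n(x)^\beta$ times a product of the ratios $(\mh/\nu)^\beta$ — decays geometrically in $j$. The cancellation $\nu_n(x)^\beta\,\nu_{n-j-1}(x_{j+1})^{-\beta}=\nu_{j+1}(x)^\beta$, combined with the smallness of $\mh$ relative to $\nu$, is what makes this work.
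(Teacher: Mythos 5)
Your argument is correct and is structurally the paper's own proof: the telescoping of $\A^n_{x'}\circ(\A^n_x)^{-1}-\Id$ into terms $\A^{n-j-1}_{x_{j+1}'}\circ\bigl(A(x_j')\circ A(x_j)^{-1}-\Id\bigr)\circ(\A^{n-j-1}_{x_{j+1}})^{-1}$, the H\"older bound on the middle factor via Lemma \ref{points}(b), and the control of the amplification $\|\A^{n-j-1}_{x_{j+1}'}\|\cdot\|(\A^{n-j-1}_{x_{j+1}})^{-1}\|$ by comparing generator norms along the two orbits correspond to \eqref{ArA}, \eqref{r_i} and \eqref{F_i'}. The one genuine divergence is which fiber bunching inequality you invoke for the amplification. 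The paper uses the second inequality in \eqref{theta def}, with $\hat\nu$, so the per-step leftover ratio is $\theta\,(\hat\mu/\hat\nu)^\beta$, and $\hat\mu<\hat\nu$ is automatic from \eqref{partial def} and \eqref{kappa}; this is exactly the closing remark ``since $\theta<1$ and $\hat\mu<\hat\nu$,'' and no adjustment of auxiliary functions is needed. You instead use the first inequality, with $\nu$, which leaves $\theta\,(\hat\mu/\nu)^\beta$; as you note, this need not be less than one, and your patch --- replacing $\hat\mu$ by a smaller continuous function with $\hat\mu\le\nu$ --- does work: the only properties of $\hat\mu$ used in the proof of Theorem \ref{alpha Holder} are that $\hat\mu^{-1}$ dominates the maximal expansion of $f$ (Lemma \ref{points}), that $\hat\mu<1$, and its appearance in \eqref{alpha def} and \eqref{n def}, all of which survive the replacement at the cost of a smaller (still positive) exponent $\alpha$. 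So your proof closes, but the paper's choice of the $\hat\nu$-inequality buys the geometric decay for free and leaves $\hat\mu$, hence the explicit $\alpha$, untouched; the $\nu$-version of the amplification bound that you derived is the one the paper reserves for Lemma \ref{Anyy'}, where the resulting ratio $(\nu/\gamma)^\beta$ is handled by the third inequality in \eqref{theta def}.
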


\begin{proof} 

 We rewrite $\,\A^n_{x'}\circ (\A^n_x)^{-1} \;$ as follows
\begin{equation}
 \begin{aligned} \label{ArA}
 & \A^n_{x'}\circ (\A^n_x)^{-1} \,= 
\A^{n-1}_{x'_1} \circ  \A_{x_0'} \circ (\A_{x_0})^{-1} \circ ( \A^{n-1}_{x_1})^{-1} =\\
  & = \A^{n-1}_{x'_1}  \circ (\Id+r_0) \circ  ( \A^{n-1}_{x_1})^{-1} =\\
  & = \A^{n-1}_{x'_1}  \circ   ( \A^{n-1}_{x_1})^{-1}  +
 \A^{n-1}_{x'_1}  \circ r_0 \circ  ( \A^{n-1}_{x_1})^{-1}   = \dots =\\
 &=\Id+\sum_{i=1}^{n} \A_{x'_i}^{n-i}\circ r_{i-1}\circ (\A^{n-i}_{x_i})^{-1} ,
 \quad \text{where  }\, r_{i}=\Id -(\A_{{x}_i'})^{-1} \circ \A_{x_i}. 
 \end{aligned}
\end{equation}
First we estimate $\| r_{i} \|$ using boundedness of $ \| (\A_{x_i'})^{-1} \|$ 
and Lemma \ref{points}  (b):
\begin{equation} \label{r_i}
\begin{aligned}
  \|r_{i}\| & = \,\| \Id - (\A_{x_i'})^{-1} \circ \A_{x_i} \| \, \le  \,
 \| (\A_{x_i'})^{-1} \| \cdot \| \A_{x_i'} -  \A_{x_i} \| \\
 & \le  C_3 \cdot\dist(x_{i}, x'_{i})^\beta 
    \le C_3 \left (  \nu _n(x)  \, \hat \mu _{n-i}(x_i) \right)^\beta. 
\end{aligned}
\end{equation}
\vskip.2cm

\noindent 
Next we estimate $\| \A^{n-i}_{x_i'} \| \cdot  \| (\A^{n-i}_{x_i} )^{-1} \| $.
Using   H\"older continuity of  $\A$  we obtain
$$
\begin{aligned}
  \frac{\|\A_{x_k'}\|}{\|\A_{x_k}\|} =   \frac{\|\A_{x_k}+\A_{x_k'} -\A_{x_k}\|}{\|\A_{x_k}\|} \;\le\;
    1+ \frac{\|\A_{x_k'}-\A_{x_k}\| }{\|\A_{x_k}\|} \;\le\; 
  1+ C_4\,\dist (x_k,x_k')^\beta.
  \end{aligned}
 $$ 
Hence we obtain using \eqref{theta def} that
$$
\begin{aligned}
&\| \A^{n-i}_{x_i'} \| \, \| (\A^{n-i}_{x_i} )^{-1} \|  \le 
  \prod_{k=i}^{n-1} \|\A_{x_k'} \| \cdot  \prod_{k=i}^{n-1} \|(\A_{x_k})^{-1} \| \le
\prod_{k=i}^{n-1} \|\A_{x_k}\| \, \|(\A_{x_k})^{-1}\|  \cdot 
   \prod_{k=i}^{n-1} \frac{\|\A_{x_k'}\|}{\|\A_{x_k}\|} \\ 
 & \le \;  \prod_{k=i}^{n-1}  \theta \,\hat \nu (x_k)^{-\beta} \cdot  \prod_{k=i}^{n-1} 
      \left( 1 + C_4 \left( \dist (x_k, x_k') \right)^\b \right) \le \\
&\le \; \theta^{n-i} \hat \nu _{n-i}(x_i) ^{-\beta} \cdot  \prod_{k=i}^{n-1} 
      \left( 1 + C_4 \left( \nu _n(x)  \hat \mu _{n-k}(x_k) \right)^\b \right)
      \le C_5 \,\theta^{n-i} \hat \nu _{n-i}(x_i) ^{-\beta},
      \end{aligned}
 $$ 
as the product is uniformly bounded in $n$ and $i$ since $\nu ,\mh <1$. In particular,
\begin{equation}   \label{F_i'}
\| \A^{n}_{x'} \| \cdot  \| (\A^{n}_{x} )^{-1} \|  
 \le C_5 \,\theta^{n} \,\hat \nu _{n}(x) ^{-\beta}. 
 \end{equation}
Now using \eqref{ArA} and  \eqref{r_i} we conclude that
$$
\begin{aligned}
&  \|\A^n_{x'}\circ (\A^n_x)^{-1} -\Id \,\|  \,\le\, \sum_{i=1}^{n} 
  \| \A^{n-i}_{x'_i}\circ r_{i-1}\circ( \A^{n-i}_{x_i})^{-1} \| \le  \\
&\le \, \sum_{i=1}^{n} C_3 \, \nu _n(x)^\beta   \hat \mu _{n-i+1}(x_{i-1}) ^\beta
\cdot C_5 \, \theta^{n-i} \hat \nu _{n-i}(x_i) ^{-\beta} \le \\
&\le \,C_6 \, \nu _n(x)^\beta \, \sum_{i=1}^{n} 
 \theta^{n-i} \, ( \hat \nu _{n-i}(x_i)^{-1}\hat \mu _{n-i}(x_i) ) ^{\beta}
 \le C_7 \, \nu _n(x)^\beta
\end{aligned}
  $$
since $\theta<1$ and $\mh < \nh$.  This completes the proof of Lemma \ref{Anxx'}.
 \end{proof}

\begin{lemma} \label{F_iS}
There exists $C_8$ such that if $w \in W^s_{r}(z)$ then for any $k \in \N$
 $$ \;  \|\, \A^k_w\,\|  \le C_8 \|\, \A^k_z\,\|\qquad \text {and} \qquad
\| ( \A^k_w )^{-1} \| \le  C_8 \| ( \A^k_z )^{-1} \|. 
$$  
\end{lemma}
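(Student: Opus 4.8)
The plan is to prove Lemma \ref{F_iS} by the standard telescoping argument that compares the products $\A^k_w = \A_{w_{k-1}}\circ\cdots\circ\A_{w_0}$ and $\A^k_z = \A_{z_{k-1}}\circ\cdots\circ\A_{z_0}$ factor by factor, using that the forward iterates of two points on the same local stable leaf approach each other geometrically. First I would note that since $w\in W^s_r(z)$ and the stable foliation is forward-contracted, there is a constant $c$ and a rate $\tau<1$ (one may take $\tau=\max_x\nu(x)$, or use the functions $\nu_k$) with $\dist(w_j,z_j)=\dist_{W^s}(f^jw,f^jz)\le c\,\tau^j$ for all $j\ge 0$; in particular all the points $w_j$ stay within $B_r(z_j)$ if $r$ is small, so the uniform bounds $\|\A_x\|\le\m$, $\|\A_x^{-1}\|\le\m$ (compactness of $\M$) and the H\"older constant of $A$ apply along the whole orbit.

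Next I would estimate the ratio $\|\A^k_w\|/\|\A^k_z\|$. Writing $\A^k_w\circ(\A^k_z)^{-1}-\Id$ as a telescoping sum exactly as in \eqref{ArA}, namely
\begin{equation}\label{telescopeF}
\A^k_w\circ(\A^k_z)^{-1}=\Id+\sum_{i=1}^{k}\A^{k-i}_{w_i}\circ s_{i-1}\circ(\A^{k-i}_{z_i})^{-1},\qquad s_j=\Id-(\A_{w_j})^{-1}\circ\A_{z_j},
\end{equation}
one bounds $\|s_j\|\le\|(\A_{w_j})^{-1}\|\cdot\|\A_{z_j}-\A_{w_j}\|\le C\dist(w_j,z_j)^\beta\le C(c\tau^j)^\beta$, and bounds the operator factor crudely by $\|\A^{k-i}_{w_i}\|\cdot\|(\A^{k-i}_{z_i})^{-1}\|\le\m^{2(k-i)}$. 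This gives a convergent-looking sum only after one compares it to $\|\A^k_z\|\cdot\|(\A^k_z)^{-1}\|$; the clean way is instead to compare factor-by-factor the norms, using $\|\A_{w_j}\|/\|\A_{z_j}\|\le 1+C_4\dist(w_j,z_j)^\beta\le 1+C_4(c\tau^j)^\beta$ as in the displayed computation preceding \eqref{F_i'}. Taking the product over $j=0,\dots,k-1$ and using $\sum_j\tau^{j\beta}<\infty$ yields $\|\A^k_w\|\le\prod_{j}(1+C_4(c\tau^j)^\beta)\,\|\A^k_z\|\le C_8\|\A^k_z\|$ with $C_8$ independent of $k$, $z$, $w$; the same argument applied to $\A^{-1}$ along the same stable orbit (note $\|(\A_{w_j})^{-1}\|/\|(\A_{z_j})^{-1}\|\le 1+C_4\dist(w_j,z_j)^\beta$ by H\"older continuity of $x\mapsto A(x)^{-1}$, which holds since $A$ takes values in a compact subset of $GL(V)$, cf. Lemma \ref{eqdist}) gives $\|(\A^k_w)^{-1}\|\le C_8\|(\A^k_z)^{-1}\|$.

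The only mild obstacle is making the geometric-decay estimate $\dist(f^jw,f^jz)\le c\tau^j$ uniform; this follows from the definition of partial hyperbolicity \eqref{partial def} together with compactness of $\M$ and the assumption $w\in W^s_r(z)$ with $r$ fixed, exactly as in the estimate $\dist_{W^s}(x_n,y_n)\le\delta_0\nu_n(x)$ in the proof of Lemma \ref{points}. Everything else is the routine telescoping-product bookkeeping already carried out in Lemma \ref{Anxx'}, so I would keep the write-up short and refer back to those computations rather than repeating them in full.
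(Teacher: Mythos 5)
The decisive step in your argument does not work. From the factor-wise comparison $\|\A_{w_j}\|\le (1+C_4\dist(w_j,z_j)^\beta)\|\A_{z_j}\|$ you can only deduce $\|\A^k_w\|\le\prod_j\|\A_{w_j}\|\le\prod_j(1+C_4(c\tau^j)^\beta)\cdot\prod_j\|\A_{z_j}\|$, and the operator norm is only submultiplicative: $\prod_j\|\A_{z_j}\|$ may exceed $\|\A^k_z\|$ by an exponential factor, so no bound of the form $\|\A^k_w\|\le C_8\|\A^k_z\|$ follows. (Take the constant cocycle generated by an $A$ with $A^2=\Id$ and $\|A\|=\|A^{-1}\|=2$; then $\|\A^{2m}_z\|=1$ while $\prod_j\|\A_{z_j}\|=4^m$.) The computation preceding \eqref{F_i'} that you invoke is of a different nature: there the quantity bounded is the mixed distortion $\|\A^{n-i}_{x'_i}\|\cdot\|(\A^{n-i}_{x_i})^{-1}\|$, where both factors are bounded \emph{above} by products of pointwise norms, so submultiplicativity works in the right direction; for the ratio $\|\A^k_w\|/\|\A^k_z\|$ it does not, and the same objection applies to your treatment of the inverse norms. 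Your fallback, the telescoping sum \eqref{telescopeF}, is also not closed: the crude bound on $\|\A^{k-i}_{w_i}\|\cdot\|(\A^{k-i}_{z_i})^{-1}\|$ gives a sum that is not uniformly bounded in $k$, and even inserting fiber bunching term by term does not fix this. A correct version of your route exists but needs a bootstrap: set $\rho_k=\|(\A^k_z)^{-1}\circ\A^k_w\|$, use $\|\A^k_w\|\le\|\A^k_z\|\,\rho_k$ inside the increment estimate together with $\|\A^k_z\|\,\|(\A^k_z)^{-1}\|\,\nu_k(z)^\beta\le L\theta^k$ to get $\rho_{k+1}\le\rho_k(1+C\theta^k)$, hence $\rho_k$ uniformly bounded, and then argue symmetrically for the inverses. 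This, however, amounts to re-proving the boundedness that underlies Proposition \ref{holonomies exist}, and it uses (weak) fiber bunching.

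The paper's proof is a one-liner that avoids all of this and uses exactly the hypothesis available where the lemma is applied, namely the existence of a continuous stable holonomy: by (H3), $\A^k_w=H^{\A,s}_{f^kz,f^kw}\circ\A^k_z\circ(H^{\A,s}_{z,w})^{-1}$, and $\|H^{\A,s}_{z,w}\|$, $\|(H^{\A,s}_{z,w})^{-1}\|$ are uniformly bounded over $z\in\M$, $w\in W^s_r(z)$ by continuity and compactness; both inequalities follow at once. Note that the lemma is later applied (e.g. in \eqref{B}) to cocycles assumed only to have standard holonomies or to be uniformly quasiconformal, so the holonomy argument is also the right level of generality. You should either adopt that argument or carry out the bootstrap sketched above; as written, your proof has a genuine gap at its central step.
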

\begin{proof}
From (H3) we have that 
$\A^k_w = H_{f^k z,f^k w} \circ  \A^k_z \circ H_{z,w}^{-1}$
and obtain the first inequality since the norms of $H_{z,w}$ 
and $H_{z,w}^{-1}$ are bounded uniformly in $z \in \M$ and 
$w \in W^s_{r}(z)$ by compactness. The second one is established similarly.
\end{proof}

\begin{lemma} \label{Anyy'}
$\| \A^{n}_{x'} \| \cdot  \| (\A^{n}_{x} )^{-1} \|  \le 
C_9 \theta^{n}  \nu _{n}(x) ^{-\beta}$
and $\| (\A^n_{y'})^{-1}\circ \A^n_y - \Id \| \,\le \,  C_{11} \theta ^n$.

\end{lemma}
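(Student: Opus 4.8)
The plan is to reduce both inequalities to a single auxiliary estimate valid for every $0\le k\le n$:
\begin{equation}\label{auxestimate}
\|(\A^k_{y'})^{-1}\|\cdot\|\A^k_y\| \,\le\, C\,\theta^k\,\nu_k(x)^{-\beta};
\end{equation}
its case $k=n$ gives the first inequality of the lemma (i.e. the bound on $\|(\A^n_{y'})^{-1}\|\cdot\|\A^n_y\|$ that enters \eqref{H estimate}). To prove \eqref{auxestimate} I would use that $y\in W^s_{\delta_0}(x)$ and $y'\in W^s_{2\delta_0}(x')$, so Lemma \ref{F_iS} bounds the left side by $C_8^2\,\|(\A^k_{x'})^{-1}\|\cdot\|\A^k_x\|$, and then apply the submultiplicativity-plus-H\"older bookkeeping already used in the proof of Lemma \ref{Anxx'}: comparing $\A_{x_j'}$ with $\A_{x_j}$ gives $\|(\A_{x_j'})^{-1}\|\le\|(\A_{x_j})^{-1}\|\bigl(1+C\,\dist(x_j,x_j')^\beta\bigr)$, the product $\prod_{j=0}^{k-1}\bigl(1+C\,\dist(x_j,x_j')^\beta\bigr)$ stays bounded because $\dist(x_j,x_j')\le\nu_n(x)\,\hat\mu_{n-j}(x_j)$ with $\nu,\hat\mu<1$ (Lemma \ref{points}(b)), and $\|\A_{x_j}\|\cdot\|(\A_{x_j})^{-1}\|\le\theta\,\nu(x_j)^{-\beta}$ by \eqref{theta def}.

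For the second inequality I would telescope along the orbit. With $s_k=(\A_{y_k'})^{-1}\circ\A_{y_k}-\Id$ one has $(\A^{k+1}_{y'})^{-1}\circ\A^{k+1}_y=(\A^k_{y'})^{-1}\circ\A^k_y+(\A^k_{y'})^{-1}\circ s_k\circ\A^k_y$, hence
$$
\bigl\|(\A^n_{y'})^{-1}\circ\A^n_y-\Id\bigr\|\,\le\,\sum_{k=0}^{n-1}\|(\A^k_{y'})^{-1}\|\cdot\|s_k\|\cdot\|\A^k_y\|.
$$
Boundedness of $\|(\A_{y_k'})^{-1}\|$ and H\"older continuity of $A$ give $\|s_k\|\le C\,\dist(y_k,y_k')^\beta$, and Lemma \ref{points}(b) gives $\dist(y_k,y_k')\le M\,\nu_n(x)\,\gamma_{n-k}(x_k)^{-1}$; combining this with \eqref{auxestimate} and the identity $\nu_n(x)=\nu_k(x)\,\nu_{n-k}(x_k)$, the $k$-th summand is at most $C\,\theta^k\,\bigl(\nu_{n-k}(x_k)/\gamma_{n-k}(x_k)\bigr)^\beta$.

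The one point that needs care — and the only real obstacle — is making this sum collapse to $O(\theta^n)$ rather than $O(n\,\theta^n)$. Here I would invoke the \emph{strict} inequality $(\nu(z)/\gamma(z))^\beta<\theta$ from \eqref{theta def}: since $\nu<\gamma$ uniformly on the compact manifold and $\theta$ was taken close to $1$, there is a single $\theta_2<\theta$ with $(\nu(z)/\gamma(z))^\beta\le\theta_2$ for all $z\in\M$, so that $\bigl(\nu_{n-k}(x_k)/\gamma_{n-k}(x_k)\bigr)^\beta\le\theta_2^{\,n-k}$ and
$$
\bigl\|(\A^n_{y'})^{-1}\circ\A^n_y-\Id\bigr\|\,\le\,C\sum_{k=0}^{n-1}\theta^k\,\theta_2^{\,n-k}\,=\,C\,\theta_2^{\,n}\sum_{k=0}^{n-1}(\theta/\theta_2)^k\,\le\,\frac{C}{\theta/\theta_2-1}\,\theta^n\,=:\,C_{11}\,\theta^n.
$$
Everything else — the operator-norm and H\"older estimates, boundedness of $\|(\A_{y_k'})^{-1}\|$ on $\M$, and the bounded product over the orbit — is routine and already appears in Lemmas \ref{F_iS} and \ref{Anxx'}.
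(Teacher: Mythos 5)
Your proposal is correct and follows essentially the same route as the paper: the first bound comes from the Lemma \ref{Anxx'}-type product estimate with the first inequality of \eqref{theta def} (the $\nu$ version) combined with Lemma \ref{F_iS}, and the second comes from the same telescoping identity with $\|r_i\|\le C\,\dist(y_i,y_i')^\beta$ and Lemma \ref{points}(b), the sum collapsing geometrically because $(\nu/\gamma)^\beta<\theta$. The only difference is cosmetic: you make explicit, via compactness, the uniform constant $\theta_2<\theta$ that the paper leaves implicit in its final summation.
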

\begin{proof} 
First we claim that $\| \A^{i}_{x'} \| \cdot  \| (\A^{i}_{x} )^{-1} \|  
 \le C_5 \theta^{i}  \nu _{i}(x) ^{-\beta}$ for $0 \le i \le n$. 
This is obtained in the same way as \eqref{F_i'} using the first inequality
in \eqref{theta def} instead of the second one.
Applying the previous lemma we also obtain
$$ \;\| ( \A^i_{y'})^{-1} \| \le  C_8 \| ( \A^k_{x'} )^{-1} \| \qquad \text {and} \qquad
 \| \A^i_y\|  \le C_8 \| \A^i_x \|
$$
for all $i \in \N$.
We conclude that for each $0 \le i\le n$, 
$$ \;\| ( \A^i_{y'} )^{-1} \| \cdot  \|\, \A^i_y\,\| \le C_9 \,\theta^i \, \nu_i (x)^{-\beta}$$
giving, in particular, the first inequality in the lemma.

Similarly to  \eqref{ArA} and \eqref{r_i} we obtain using Lemma \ref{points} (b) that
$$
\begin{aligned}
&  (\A^n_{y'})^{-1}\circ \A^n_y \, 
 =\Id+\sum_{i=0}^{n-1} (\A^{i}_{y'})^{-1}\circ r_{i}\circ \A^i_y ,
 \quad \text{where  }r_i=\Id - (\A_{y_i'})^{-1} \circ \A_{y_i} \\
&  \text{satisfy  } \quad \| r_i \| = \|(\A_{y_i'})^{-1} \circ \A_{y_i} -\Id\| \le C_3\,\dist (y_i, y_i ')^\beta
\le  M C_3 \nu_n(x)^\beta \gamma_{n-i}(x_i)^{-\beta}.
\end{aligned}
$$
Using that $\nu(x)^\beta (\gamma(x)^\beta \theta)^{-1}<1$ by \eqref{theta def},
we conclude that
$$
\begin{aligned}
&\| (\A^n_{y'})^{-1}\circ \A^n_y - \Id \| \,\le \,
\sum_{i=0}^{n-1} \|(\A^{i}_{y'})^{-1}\|\cdot \|\A^i_y\| \cdot \| r_{i}\| \,\le \\
&\le M C_3 C_9\sum_{i=0}^{n-1}\theta^{i}  \nu _{i}(x) ^{-\beta} \nu_n(x)^\beta \gamma_{n-i}(x_i)^{-\beta}
\le C_{10} \theta^{n}\sum_{i=0}^{n-1}\theta^{i-n}  \nu _{n-i}(x_i) ^{\beta} \gamma_{n-i}(x_i)^{-\beta} \le C_{11} \theta ^n.
\end{aligned}
$$
\end{proof}

\subsection{Proof of Proposition \ref{constant}}
Let $\rho_1 < \dots <\rho_l$ be the distinct moduli of the eigenvalues of 
the matrix $A$. Let $\R^d = E_1 \oplus  \dots \oplus E_l$  be the 
corresponding splitting into direct sums of the 
generalized eigenspaces, and let $\A_i=\A| _{E_i}$. Then
for any $\e >0$ there exists $K_\e$ such that 
$$
K_\e^{-1} (\rho_i-\e)^n \le \| \A_i^n v \| \le K_\e (\rho_i+\e)^n
\quad \text{for any unit vector }v\in E_i.
$$ 
Then any sufficiently $C^0$ small H\"older continuous perturbation 
$\B$ of $\A$ has a H\"older continuous invariant splitting with similar 
estimates for the corresponding restrictions $\B_i$ (cf. \cite[Theorems 3.4 and 3.8]{Pe}). It follows that $\B_i$'s 
are close to conformal and satisfy the weaker fiber bunching condition
\eqref{weak fiber bunched}. 
Hence by Remark \ref{weaker} $\B_i$'s have standard holonomies, which 
combine into the standard holonomy for $\B$. We note, however, 
that the H\"older exponent of the splitting and of the resulting holonomy  
may be lower than that  of $\B$.

\subsection{Proof of Theorem \ref{measurable cohomology}.}

Let $H^{\A}$ be the standard holonomies for $\A$, which exist by the assumption. Since $\B$ is  uniformly quasiconformal, it satisfies the 
weaker fiber bunching condition \eqref{weak fiber bunched}.
 Thus, by Remark \ref{weaker}, $\B$ has standard holonomies, 
 which we denote by $H^{\B}$.

Our main goal is to show that $C$ intertwines the holonomies of $\A$ and 
$\B$ on a set of full measure. More precisely, for the stable holonomies we 
will show that there exists a subset $Y$ of $\M$ with $\m(Y)=1$ such that 
\eqref{intertwines} holds for all $x,y \in Y$ such that $y\in W^s(x)$. A similar 
statement holds for the unstable holonomies.

By the assumption, there is a set of full measure $Y_1\subset \M$ such that 
 for all $x\in Y_1$, $\,\A_x=C(fx)\circ \B_x \circ C(x)^{-1}$.   Since the function 
 $C$ is $\m$-measurable and $GL(V)$ is separable, by Lusin's theorem there 
exists a compact set $S\subset \M$ with $\m(S)>1/2$ such that 
$C$ is uniformly continuous on  $S$. 
It follows that $\| C\|$ and $\|C^{-1} \|$ are bounded on $S$.
Let $Y_2$ be the set of points in $\M$
for which the frequency of visiting $S$ equals $\m(S)$.
By Birkhoff ergodic theorem, $\m(Y_2)=1$. 
\vskip.1cm
Let $Y=Y_1\cap Y_2$.
Clearly, $\m(Y)=1$ and we can assume
 that the sets $Y_1, Y_2, Y$ are $f$-invariant.
Suppose that $x,y\in Y$ and 
  $y \in W^s_{R}(x)$ for some fixed radius $R$. Then 
\begin{equation} \label{meascoh1}
\begin{aligned}
&(\A^n_y)^{-1}\circ \A^n_x= 
\left( C(f^ny)\circ \B^n_y\circ C(y)^{-1}\right)^{-1} \circ 
C(f^nx)\circ  \B^n_x\circ  C(x)^{-1}= \\
&= C(y) \circ (\B^n_y)^{-1}\circ  C(f^n y)^{-1} \circ 
 C(f^nx)\circ \B^n_x \circ C(x)^{-1}= \\
&= C(y) \circ (\B^n_y)^{-1}  \circ (\Id +\Delta_n) \circ \B^n_v \circ C(x)^{-1}= \\
&= C(y) \circ (\B^n_y)^{-1} \circ \B^n_x\circ C(x)^{-1} + 
C(y)\circ (\B^n_y)^{-1} \circ \Delta_n \circ \B^n_x \circ C(x)^{-1} . 
\end{aligned}
\end{equation}
We will show that the second term in the last line tends to 0 along a subsequence.
First we estimate the norm of $\Delta_n$.
\begin{equation} \label{Delta_n}
  \| \Delta_n \|= \| C(f^n y)^{-1} \circ C(f^n x) - \Id\| \le
   \| C(f^n y)^{-1} \| \cdot \|C(f^n x) - C(f^n y)\|.
\end{equation}
Since $x,y \in Y_2\subset Y$, there exists a sequence 
$\{n_i\}$ such that $f^{n_i}x, f^{n_i}y \in S$ for all $i$.
Since  $y \in W^s_{R}(x)$, $\,\dist (f^{n_i}x, f^{n_i}y)\to 0$
 and hence $\|C(f^{n_i} x) - C(f^{n_i} y)\| \to 0$ by uniform continuity 
of $C$ on $S$. As $\|C^{-1}\|$ is uniformly bounded on $Y$, 
\eqref{Delta_n} implies 
$$
\| \Delta_{n_i} \| \to 0 \quad\text{ as }i\to\infty
$$
\vskip.1cm

Using Lemma \ref{F_iS} and quasiconformality of $\B$ we also obtain that
\begin{equation}\label{B}
\| (\B^n_y)^{-1}\| \cdot \| \B^n_x \|
\le  \| (\B^n_y)^{-1}\| \cdot C_8 \| \B^n_y \|
\le  C_8 \, K(\B)
\end{equation}
for all $x\in \M$ and $y\in W^s_{R}(x)$. Now it follows that 
$$
\| C(y)\circ (\B^{n_i}_y)^{-1} \circ \Delta_{n_i} \circ \B^{n_i}_x \circ C(x)^{-1} \|
\to 0 \quad\text{ as }i\to\infty.
$$
Since the holonomies $H^{\A,s}$ and $H^{\B,s}$ are standard, i.e. satisfy \eqref{as limits},  passing to the limit in 
\eqref{meascoh1}  along the sequence $n_i$ yields
\begin{equation}\label{full measure}
H_{x,y}^{\A,s}=C(y)\circ H_{x,y}^{\B,s} \circ C(x)^{-1}
\quad\text{for all } x,y\in Y \text{ such that }y\in W^s_{R}(x).
\end{equation}
We conclude that $C$ intertwines the holonomies $H^{\A}$ and $H^{\B}$ 
on a set of full measure.

It follows that $C(y) =H_{x,y}^{\A,s}\circ C(x)\circ (H_{x,y}^{\B,s})^{-1} $ and, 
by continuity of holonomies, we conclude that $C$ is so called {\em essentially s-continuous}
in the sense of \cite{ASV}. 
Similarly, $C$ is {\em essentially u-continuous}. 
By the assumption on the base system ($f$ is center bunched and accessible),
\cite[Theorem D]{ASV} implies that $C$ coincides on a set of full measure 
with a continuous function $\tilde C$. It follows that $\tilde C$ is a conjugacy
between $\A$ and $\B$ and, by \eqref{full measure}, intertwines $H^{\A}$ and $H^{\B}$.

\subsection{Proof of Proposition \ref{cohomology 2}.}

 As in the proof of Theorem  \ref{measurable cohomology}
 we obtain \eqref{meascoh1}.
 Since $C$ is $\beta$-H\"older, for any $x\in \M$ and  $y \in W^s_{R}(x)$
 $$
 \begin{aligned}
 \|\Delta_n\|& \le    \| C(f^n y)^{-1} \| \cdot \|C(f^n x) - C(f^n y)\|   \\
 & \le K_1\dist(f^nx, f^n y)^\beta\le K_2 \,\nu_n(x)^\beta \dist (x,y)^\beta.
 \end{aligned}
$$ 
Using fiber bunching of $\B$ we choose $\theta<1$ as in \eqref{theta def},
and by Lemma \ref{F_iS} we obtain
$$
\begin{aligned}
&\|(\B^n_y)^{-1} \circ \Delta_n \circ \B^n_x\| \le 
\| (\B^n_y)^{-1}\| \cdot \| \B^n_x \|  \cdot \|\Delta_n\| \\
&\le  C_8 \| (\B^n_x)^{-1}\| \cdot \| \B^n_x \|   \cdot  K_2 \,\nu_n(x)^\beta \dist (x,y)^\beta \le K_3 \,\theta^n \, \dist(x,y)^\beta .
\end{aligned}
$$
It follows that the second 
term in last line of \eqref{meascoh1} tends to 0 as $n\to\infty$ for every $x\in \M$ 
and $y\in W^s_{R}(x)$. Passing to the limit in \eqref{meascoh1} we conclude that
 $C$ intertwines the standard holonomies of $\A$ and $\B$.

\subsection{Proof of Proposition \ref{nonreg example}.}
We use the construction described in \cite[Theorem 5.5.3]{KN} which was
based on an example by R. de la Llave \cite{L}.
Let $f$ be an Anosov automorphism of $\T^2$ with eigenvalues 
$\lambda >1$ and $\lambda^{-1}$.  We fix a number $r$, where $\beta'<r<\beta$,
and set $\mu = \lambda ^r$.
We consider smooth $GL(2,\R)$-valued  cocycles over  $f$
 $$
\B= \left[ \begin{array}{cc} \mu & 0 \\ 0 & 1 \end{array} \right]
\quad\text{and} \quad
\A(x)=\left[ \begin{array}{cc} \mu& \phi(x) \\ 0 & 1 \end{array} \right] 
$$
Then the constant cocycle $\B$ is $\beta$ fiber bunched.
We take $\phi$ sufficiently small so that $\A$ is sufficiently $C^0$ close to $\B$
 and hence it is also $\beta$ fiber bunched. 
Hence both $\A$ 
 and $\B$ have standard stable and unstable holonomies which 
 are $\beta$-H\"older along the leaves of the corresponding foliation, 
 i.e. satisfy (H4).
 
 We take $\e$ such that $\beta '<r-\epsilon$ and  ${r+\epsilon}<\beta$.
By Theorem 5.5.3 in \cite{KN},  there exist 
arbitrarily $C^\infty$ small functions $\phi(x)$ such that $\A$ and $\B$ 
 are cohomologous via a $C^{r-\epsilon}$ conjugacy, but not 
via a $C^{r+\epsilon}$ conjugacy. 
Thus there is a $\beta'$-H\"older conjugacy $C$ between  $\A$ and $\B$, 
but no $\beta$-H\"older conjugacy.
It follows that no conjugacy $\tilde C$ can intertwine the standard 
holonomies of $\A$ and $\B$.  Indeed, 
otherwise $\tilde C$ would  be $\beta$-H\"older along the stable and unstable
leaves of $f$, since so are the standard holonomies, and hence it
would be $\beta$-H\"older on $\T^2$.
 
 In this example, the low regularity of $C$ is due to the low regularity
 of the unique invariant expanding sub-bundle $\mathcal V$ for $\A$, 
which has to be mapped by $C$ to the first coordinate line. In fact,
$C$ and $\mathcal V$ are smooth along  the stable leaves of $f$, and
$C$ intertwines the standard stable holonomies of $\A$ and $\B$,
but not the unstable ones.

\subsection{Proof of Theorem \ref{sufficient}.}
In the proof we will use $x$ in place of $x_0$ to simplify notations.
We define $C(x)=C_x$, and then for every $y\in \M$ we define 
$$
C(y)=\H^{\A,P}_{x,y} \circ C(x)\circ  (\H^{\B,P}_{x,y})^{-1},
$$
where $P_{x,y}$ is an $su$-path from $x$ to $y$. Note that 
$C \mapsto \H^{\A,P}_{x,y} \circ C \circ  (\H^{\B,P}_{x,y})^{-1}$
defines a map from the group $GL(V)$ of operators on the fiber at $x$ to
the one on the fiber at $y$, and that a concatenation of paths corresponds
to the composition of the maps. Therefore, it is easy to check that the  
assumption (i) implies that $C(y)$ is independent of the $su$-path $P$ 
and hence is well-defined.
In particular, it follows that for any $y,z\in \M$ and  any $su$-path 
$P_{y,z}$ from $y$ to $z$
\begin{equation} \label{sufficient1}
C(z)=\H^{\A,P}_{y,z} \circ C(y)\circ  (\H^{\B,P}_{y,z})^{-1}.
\end{equation} 
Hence continuity of holonomies implies that the function $C$ is continuous along 
the stable and unstable foliations of $f$.
Since $f$ is accessible, this implies continuity of $C$  on $\M$ by \cite[Theorem E]{ASV}.

It remains to show that $C$ satisfies the cohomological equation.
Consider any $y\in \M$ and fix an $su$-path $P=P_{x,y}$ from $x$ to $y$.
Then $fP$ is an  $su$-path from $fx$ to $fy$.  
By property (H3) of holonomies we obtain using \eqref{sufficient1}
with $z=fy$ and $y=fx$ that
$$
\begin{aligned}
C(fy) &= \H^{\A,fP}_{fx,fy} \circ C(fx)\circ  (\H^{\B,fP}_{fx,fy})^{-1}=  \\
&=\A_y \circ \H^{\A,P}_{x,y} \circ \A_x^{-1} \circ C(fx)\circ
\B_x \circ (\H^{\B,P}_{x,y})^{-1} \circ \B_y^{-1}.
\end{aligned}
$$
By assumption (ii) and  \eqref{sufficient1},
$$
C(fy)=\A_y \circ \H^{\A,P}_{x,y}\circ C(x) \circ (\H^{\B,P}_{x,y})^{-1} 
\circ \B_y^{-1}
=\A_y \circ C(y)\circ  \B_y^{-1} ,
$$
and we conclude that $C$ is a  conjugacy.

%%%%%%%%%%%%%%%%%%%%%%%%%%%%%%%%%
%%%%%%%%               bibliography                %%%%%%%%%

\vskip.7cm

\end{document}